%%%%%%%%%%%%%%%%%%%%%%%%%%%%%%%%%%%%%%%%%%%%%%%%%%%%%%%%%%%%%%%%%%%%%%%%%%%%%%%
%                                                                             %
% The binary digits of n+t                                                    %
% Lukas Spiegelhofer and Michael Wallner                                      %
% Last change: 2022-04-08                                                     %
%                                                                             %
%%%%%%%%%%%%%%%%%%%%%%%%%%%%%%%%%%%%%%%%%%%%%%%%%%%%%%%%%%%%%%%%%%%%%%%%%%%%%%%

\documentclass{article}

\usepackage{amsthm,amsmath,amssymb}

\usepackage[margin=35mm]{geometry}

\usepackage{mathtools}

%sorts multiple items in \cite and groups them, i.e., 4,5,6 -> 4-6
\usepackage[noadjust]{cite}

%load hyperref always as the last package
\usepackage{xcolor}
\usepackage{hyperref}
\definecolor{darkgreen}{rgb}{0,0.4,0}
\definecolor{BrickRed}{rgb}{0.65,0.08,0}
\hypersetup{colorlinks=true,linkcolor=blue,citecolor=red,filecolor=BrickRed,urlcolor=darkgreen}
\linespread{1.05}

\newcommand{\tL}{\mathtt 1}                            % typewriter 1
\newcommand{\tO}{\mathtt 0}                            % typewriter 0

\DeclareMathOperator{\e}{\mathrm{e}}                   % exp(2\pi ix)
\DeclareMathOperator{\LandauO}{\mathcal O}             % Big O-notation
\DeclareMathOperator{\realpart}{\mathrm {Re}}
\DeclareMathOperator{\imagpart}{\mathrm{Im}}
\newcommand{\cum}{\kappa}
\def\bl{\hspace{0.5pt}\underline{\hphantom{\hspace{0.6em}}}\hspace{0.5pt}}	%bottom line
\newcommand{\gammap}{\gamma^*}

\newtheorem{theorem}{Theorem}[section]
\newtheorem{proposition}[theorem]{Proposition}
\newtheorem{corollary}[theorem]{Corollary}
\newtheorem{lemma}[theorem]{Lemma}

\theoremstyle{remark}
\newtheorem*{remark}{Remark}
\newtheorem*{remarks}{Remarks}
\newtheorem*{notation}{Notation}

\numberwithin{equation}{section}

\title{The binary digits of $n+t$}

\author{
\begin{tabular}{c@{\hspace{0.5cm}}c@{\hspace{0.5cm}}c}
\begin{tabular}{c}Lukas Spiegelhofer
\\Montanuniversit\"at Leoben, Austria
\end{tabular}
&
\begin{tabular}{c}Michael Wallner\\TU Wien, Austria
\end{tabular}
\end{tabular}
}
\date{}
\begin{document}
\maketitle
\begin{abstract}
The binary sum-of-digits function $s$ counts the number of ones in the binary expansion of a nonnegative integer.
For any nonnegative integer $t$, T.~W.~Cusick defined the asymptotic density
$c_t$ of integers $n\geq 0$ such that 
\[s(n+t)\geq s(n).\]
In 2011, he conjectured that $c_t>1/2$ for all $t$ --- the binary sum of digits should, more often than not, weakly increase when a constant is added.
In this paper, we prove that there exists an explicit constant $M_0$ such that indeed $c_t>1/2$ if the binary expansion of $t$ contains at least $M_0$ maximal blocks of contiguous ones, leaving open only the ``initial cases'' --- few maximal blocks of ones --- of this conjecture.
Moreover, we sharpen a result by Emme and Hubert (2019), proving that the difference $s(n+t)-s(n)$ behaves according to a Gaussian distribution, up to an error tending to $0$ as the number of maximal blocks of ones in the binary expansion of $t$ grows.
\end{abstract}

\renewcommand{\thefootnote}{\fnsymbol{footnote}} 
\footnotetext{\emph{2010 Mathematics Subject Classification.} Primary: 11A63, 05A20; Secondary: 05A16,11T71}
% 	11A63   	Radix representation; digital problems
%   05A20     Combinatorial inequalities

%   05A16     Asymptotic enumeration
%   11T71     Algebraic coding theory; cryptography
\footnotetext{\emph{Key words and phrases.} Cusick conjecture, Hamming weight, sum of digits}
\footnotetext{Lukas Spiegelhofer was supported by the Austrian Science Fund (FWF), project F5502-N26, which is a part of the Special Research Program ``Quasi Monte Carlo methods: Theory and Applications'',
and by the FWF-ANR project ArithRand (grant numbers I4945-N and ANR-20-CE91-0006).
Michael Wallner was supported by an Erwin Schr{\"o}dinger Fellowship and a stand-alone project of the Austrian Science Fund (FWF):~J~4162-N35 and~P~34142-N, respectively.}
\renewcommand{\thefootnote}{\arabic{footnote}}

%\dedicatory{}

\section{Introduction and main result}
The binary expansion of an integer is a fundamental concept occurring most prominently in number theory and computer science. Its close relative, the decimal expansion, is found throughout everyday life to such an extent that ``numbers'' are often understood as being the same as a string of decimal digits. However, it is difficult to argue --- mathematically --- that base ten is special; in our opinion the binary case should be considered first when a problem on digits occurs.

The basic problem we deal with is the (not yet fully understood) addition in base two.
Let us consider two simple examples: $\tL\tO + \tL = \tL\tL$ and $\tL\tL + \tL = \tL\tO\tO$. 
The difference between these two, and what makes the second one more complicated, is the occurrence of \emph{carries} and their interactions via \emph{carry propagation}.
These carries turn the problem of addition into a complicated case-by-case study and a complete characterization is unfortunately out of sight.
In order to approach this problem, we consider a parameter associated to the binary expansion --- the \emph{binary sum of digits} $s(n)$ of a nonnegative integer $n$.
This is just the number of $\tL$s in the binary expansion of $n$, and equal to the minimal number of powers of two needed to write $n$ as their sum.
While we are only dealing with this parameter instead of the whole expansion, we believe that it already contains the main difficulties caused by carry propagation.

Cusick's conjecture encodes these difficulties by simultaneously studying the sum-of-digits function of $n$ and $n+t$.
It states (private communication, 2011, 2015\footnote{The conjecture was initially termed ``Cusick problem'' or ``Question by Cusick'' in the community, but in an e-mail dated 2015 to the first author, Cusick upgraded it to ``conjecture''.}) that for all $t\geq 0$,
\begin{equation}
	\label{eqn_cusick}    
	c_t>1/2,    
\end{equation}
where
\[
c_t = \lim_{N\rightarrow \infty}
\frac 1N
\bigl \lvert\bigl\{0\leq n<N:s(n+t)\geq s(n)\bigr\}\bigr \rvert  
\] 
is the proportion of nonnegative integers $n$ such that $n+t$ contains it its binary representation at least as many $\tL$s as $n$.

This easy-to-state conjecture seems to be surprisingly hard to prove.
Moreover, it has an important connection to divisibility questions in Pascal's triangle: the formula
\begin{equation}\label{eqn_legendre}
s(n+t)-s(n)=s(t)-\nu_2\left({n+t\choose t}\right)
\end{equation}
essentially due to Legendre
links our research problem to the $2$-valuation $\nu_2$ of binomial coefficients, 
which is defined by $\nu_2(a) := \max\{e \in \mathbb{Z} : 2^e~|~a\}$.
Note also that the last term in~\eqref{eqn_legendre} is the number of carries appearing in the addition $n+t$, a result that is due to Kummer~\cite{Kummer1852}.
The strong link expressed in~\eqref{eqn_legendre}, and the combination of simplicity and complexity, has been a major motivation for our research.

In order to better understand the conjecture, we start with some simple examples.
For $t=0$ we directly get $c_0=1$. 
For $t=1$ it suffices to consider the last two digits of $n$ to obtain $c_1=3/4$. 
Note that in the two binary additions above we have $t=1$, where the first one satisfies $s(n+1)\geq s(n)$, while the second does not.
For more values of $c_t$ we used the recurrence~\eqref{eqn_density_recurrence} defined below and we verified $c_t>1/2$ for all $t\leq 2^{30}$ numerically. 
In Figure~\ref{fig:Cusick8K} we illustrate the first values of $c_t$.

\begin{figure}[ht] 
	\begin{center}%	
	\includegraphics[width=\textwidth]{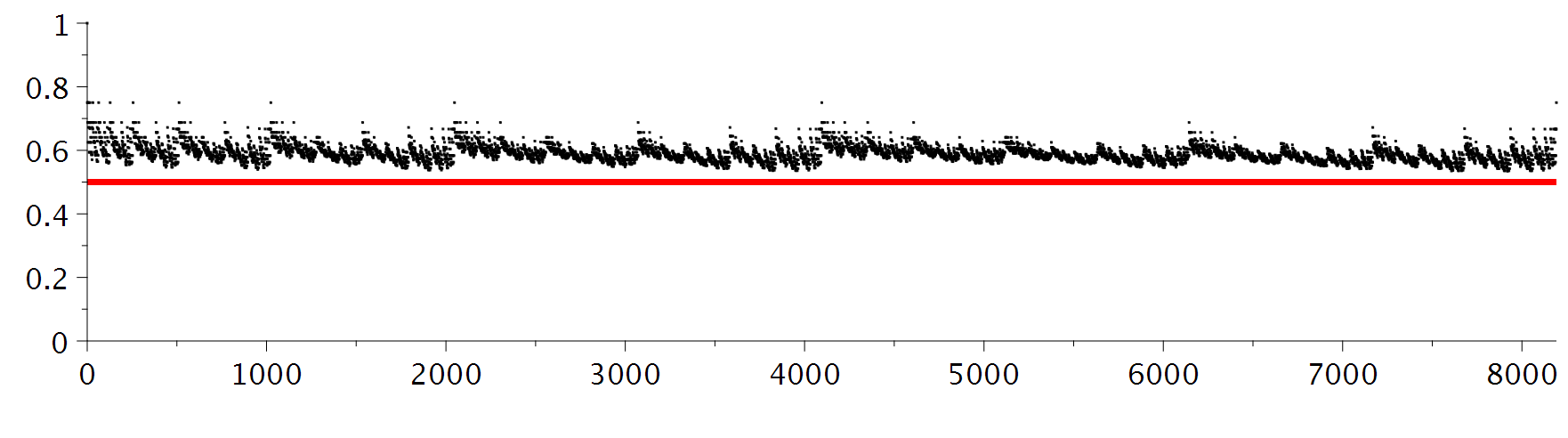}
	\caption{Cusick's conjecture states that $c_t > 1/2$ for all $t \geq 0$, which is illustrated in this figure for all $t \leq 2^{13}$, and which we computationally confirmed for all $t\leq2^{30}$.
In this paper we prove that it holds for all $t$ with sufficiently many blocks of $\tL$s, so that only finitely many classes (each class is concerned with those $t$ having a fixed number of maximal blocks of $\tL$s) remain open.
	}%
	\label{fig:Cusick8K}%
	\end{center}%
\end{figure} 

The full conjecture is still open, yet some partial results have been obtained~\cite{DKS2016,EH2018,EH2018b,EP2017,S2019,S2020}.
Among these, we want to stress a central limit-type result by Emme and Hubert~\cite{EH2018},
a lower bound due to the first author~\cite{S2020}, 
and
an almost-all result by Drmota, Kauers, and the first author~\cite{DKS2016} stating that 
for all $\varepsilon>0$, we have
\[\lvert\{t<T:1/2<c_t<1/2+\varepsilon\}\rvert=T-\LandauO\left(\frac{T}{\log T} \right).\]
(The symbol~$\LandauO$ is used for Big O notation throughout this paper.)
Moreover, Cusick's conjecture is strongly connected to the \emph{Tu--Deng conjecture}~\cite{TD2011,TD2012} in cryptography, which is also still open, yet with some partial results~\cite{CLS2011,DY2012,F2012,FRCM10,SW2019,TD2011}.
We presented this connection in~\cite{SW2019}, in which we
proved an almost-all result for the Tu--Deng conjecture 
and where we showed that the full Tu--Deng conjecture implies Cusick's conjecture.

The main theorem of this paper is the following near-solution to Cusick's conjecture, which significantly improves the previous results.
Note that it happens repeatedly that difficult conjectures are (more easily) provable for sufficiently large integers and recently even two more important ones have been resolved in this manner: Sendov's conjecture~\cite{T2021} and the Erd\H{o}s--Faber--Lov\'asz conjecture~\cite{KKKMO2021}.
Our method will combine several techniques such as recurrence relations, cumulant generating functions, and integral representations.

\begin{theorem}\label{thm_main}
There exists a constant $M_0$ with the following property:
If the natural number~$t$ has at least $M_0$ maximal blocks of $\tL$s in its binary expansion, then $c_t>1/2$.
\end{theorem}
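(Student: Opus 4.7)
My strategy is to prove a quantitative Gaussian local limit theorem for the integer-valued random variable $X_t := s(n+t) - s(n)$ (with $n$ drawn uniformly from $\{0,\ldots,2^N-1\}$ and $N\to\infty$), uniform in the block structure of $t$, and then to read the sign of $c_t-\tfrac12$ off the resulting expansion.

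\emph{Step 1: Block decomposition and the characteristic function.} Decompose $t$ into its $M$ maximal blocks of $\tL$s, separated by blocks of $\tO$s, and analyze the addition $n+t$ one block at a time, tracking the incoming carry bit as a state. The recurrence~\eqref{eqn_density_recurrence} together with this bit-by-bit analysis yields a factorization
\[
\varphi_t(\vartheta) \;:=\; \mathbb E\!\left[\e^{\mathrm i\vartheta X_t}\right] \;=\; \left(\prod_{j=1}^M B_j(\vartheta)\right) G(\vartheta),
\]
in which each $B_j$ is an explicit transfer factor depending only on the $j$-th block (and the subsequent gap) and $G$ is a bounded boundary factor. A telescoping calculation shows $\sum_{n=0}^{2^N-1}(s(n+t)-s(n))=t$, so the asymptotic mean of $X_t$ vanishes, while the product form forces $\sigma_t^2 \asymp M$ and, more generally, renders the cumulant generating function $K_t(\vartheta) := \log\varphi_t(\vartheta)$ a sum of $M$ analytic pieces with uniformly bounded derivatives near $\vartheta=0$.

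\emph{Step 2: Local limit theorem with explicit error.} Because of this additive behaviour of $K_t$, the Fourier inversion
\[
\mathbb P(X_t=k) \;=\; \frac{1}{2\pi}\int_{-\pi}^{\pi} \e^{-\mathrm i k\vartheta}\varphi_t(\vartheta)\, d\vartheta
\]
can be evaluated by splitting the interval into a central saddle-point regime (where $\lvert\varphi_t(\vartheta)\rvert\leq \e^{-cM\vartheta^2}$) and tails handled via analyticity of $K_t$, followed by an Edgeworth-style expansion to second order. The outcome is a local Gaussian approximation with error of order $M^{-1}$, uniformly in $k\in\mathbb Z$ and in all $t$ with $M$ blocks --- exactly the promised sharpening of Emme--Hubert.

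\emph{Step 3: Reading off the sign of $c_t-\tfrac12$.} Summing the local estimate for $k\geq 0$ and grouping around the vanishing mean yields
\[
c_t - \tfrac12 \;=\; \frac{1}{2\sigma_t\sqrt{2\pi}} \;-\; \frac{\cum_3(X_t)}{6\sigma_t^3\sqrt{2\pi}} \;+\; \LandauO(M^{-1}).
\]
The first term (morally $\tfrac12\mathbb P(X_t=0)$) is the discreteness bonus of a symmetric integer-valued distribution and is strictly positive of order $M^{-1/2}$. Since $X_t\leq s(t)$ is bounded above while its lower tail is long, one expects $\cum_3(X_t)\leq 0$, so the skewness term is also non-negative; combining them gives $c_t-\tfrac12 \geq c\,M^{-1/2}+\LandauO(M^{-1})$, and hence $c_t>\tfrac12$ once $M\geq M_0$.

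The \textbf{main obstacle} is Step 3: the discreteness bonus and the third-cumulant contribution are \emph{both} of order $M^{-1/2}$, so skewness cannot simply be absorbed into an error term. The proof must therefore exploit the block factorization of Step 1 to compute or bound $\cum_3(X_t)$ explicitly --- ideally showing $\cum_3(X_t)\leq 0$, or at least $\lvert\cum_3(X_t)\rvert< 3\sigma_t^2$ --- uniformly across all $t$ with exactly $M$ maximal blocks of $\tL$s. This quantitative control of the third cumulant is the combinatorial heart of the proof and is what ultimately pins down the numerical value of $M_0$.
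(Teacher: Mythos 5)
Your Steps 1--2 are essentially the paper's route: the characteristic function $\gamma_t(\vartheta)=\sum_j\delta(j,t)\e(j\vartheta)$ satisfies the transfer recurrence \eqref{eqn_gamma_recurrence}, the cumulants are $O(M)$ with $\cum_2(t)\geq M$, and one truncates the Fourier integral at $\vartheta_0\asymp M^{-1/2}\log M$. The genuine gap is in Step 3, and it is exactly the issue you flag as the ``main obstacle'' --- but the resolution you envisage cannot work. The quantity to control is $D(t)=\cum_2(t)-\cum_3(t)/3$, since your two leading terms combine to $D(t)\big/\bigl(2\sqrt{2\pi}\,\cum_2(t)^{3/2}\bigr)$. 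One can indeed prove $D(t)\geq 4$ for all $t\geq1$ (so $\lvert\cum_3\rvert<3\sigma_t^2$ always holds), but there exist $t$ with arbitrarily many blocks for which $D(t)$ stays \emph{bounded}: appending a block $\tO\tL^k$ increases $D$ by only about $k2^{-k}$, so taking all blocks of $\tL$s long and all inner gaps of length one keeps $D(t)$ below a constant. For such $t$ your main term is of order $M^{-3/2}$, which is swallowed both by your claimed $O(M^{-1})$ error and by the next Edgeworth terms involving $\cum_4$ and $\cum_5$, which are themselves of size $M\cdot\cum_2^{-5/2}\asymp M^{-3/2}$. A second-order expansion with error $O(M^{-1})$ therefore can never decide the sign of $c_t-\tfrac12$ in the critical case, even granting everything you ask of $\cum_3$.

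Two further repairs are needed. First, summing a local limit theorem over $k\geq0$ loses a factor of about $\sqrt M$; the paper instead starts from the exact identity $c_t=\tfrac12+\tfrac12\delta(0,t)+\tfrac12\int\imagpart\gamma_t(\vartheta)\cot(\pi\vartheta)\,\mathrm d\vartheta$ and expands that integral, keeping cumulants up to order $5$ so that the error is $O\bigl(M^{-2}(\log M)^{11}\bigr)$. Second, even with that precision one must (i) characterize the exceptional set $\{t:D(t)<D_0\}$ --- such $t$ have boundedly many inner blocks of $\tO$s of length $\geq2$ and boundedly many short blocks of $\tL$s --- and (ii) prove for these $t$ a lower bound of the form $\cum_5(t)\geq 150N_2-O(1)$ together with the upper bound $\cum_4(t)\leq 26(M+1)$, so that the surviving combination $53\cum_2-4\cum_4+\tfrac25\cum_5$ is positive. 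This entire second half of the argument, which is where the value of $M_0$ is actually pinned down, is absent from your plan.
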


\begin{remark}
We note the important observation that all constants in this paper could be given numerical values by following our proofs.
In order to keep the technicalities at a minimum, we decided not to compute them explicitly.
In this paper, we do not rely on arguments making it impossible to extract explicit values for our constants (such as certain proofs by contradiction).
We are dealing with \emph{effective} results, without giving a precise definition of this term.
\end{remark}

The central objects to tackle the conjecture are the asymptotic densities
\[\delta(j,t)=\lim_{N\rightarrow\infty}\frac 1N\#\,\bigl\{0\leq n<N
:s(n+t)-s(n)=j\bigr\},\]
where $j\in\mathbb Z$.
The limit exists in our case; see B\'esineau~\cite{B1972}.
These densities lead to the useful decomposition
\begin{equation}\label{eqn_ct_sum}
	c_t=\sum_{j\geq 0}\delta(j,t).
\end{equation}
The sum on the right hand side is in fact finite, since $\delta(j,t)=0$ for $j>s(t)$, which follows from~\eqref{eqn_legendre}.
Therefore we get equality in~\eqref{eqn_ct_sum} ---
asymptotic densities are finitely additive.

Distinguishing between even and odd cases, one can show that the values $\delta(k,t)$ satisfy the following recurrence~\cite{DKS2016,S2019,S2020}:
\begin{equation}\label{eqn_deltaj1}
\delta(j,1)=\begin{cases}0,&j>1;\\2^{j-2},&j\leq 1,\end{cases}
\end{equation}
and for $t\geq 0$,
%{{{eqn_density_recurrence
\begin{equation}\label{eqn_density_recurrence}
\begin{aligned}
\delta(j,2t)&=\delta(j,t),\\
\delta(j,2t+1)&=\frac 12 \delta(j-1,t)+\frac 12\delta(j+1,t+1).
\end{aligned}
\end{equation}
%}}}eqn:density_recurrence
In particular, the recurrence shows that $\delta(\bl,t)$ is a probability mass function for each $t$:
\begin{equation}\label{eqn_delta_summable}
\sum_{j\in\mathbb Z}\delta(j,t)=1,
\end{equation}
and $\delta(j,t)\geq 0$ by definition. 
Furthermore, the set
\[
\{n\in\mathbb N:s(n+t)-s(n)=j\}
\]
defining $\delta(j,t)$ is a finite union of arithmetic progressions $a+2^m\mathbb N$,
which can be seen along the same lines.

Our second main result gives an asymptotic formula for the densities $\delta(j,t)$ and is obtained in the course of establishing Theorem~\ref{thm_main}.
\begin{theorem}\label{thm_normal}
For integers $t\geq 1$, let us define
\begin{align*}
\cum_2(1)=2;\qquad \cum_2(2t)=\cum_2(t);\qquad
\cum_2(2t+1)=\frac{\cum_2(t)+\cum_2(t+1)}2+1.
\end{align*}
If the positive integer $t$ has $M$ maximal blocks of $\tL$s in its binary expansion, and $M$ is larger than some constant $M_0$, then we have
\begin{equation*}%\label{eqn_delta_normal}
\delta(j,t)
=\frac 1{\sqrt{2\pi\cum_2(t)}}\exp\left(-\frac{j^2}{2\cum_2(t)}\right)
+\LandauO\bigl(M^{-1}(\log M)^4\bigr)
\end{equation*}
for all integers $j$.
The multiplicative constant in the error term can be made explicit.
\end{theorem}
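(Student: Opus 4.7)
The plan is Fourier inversion. Define the characteristic function
\[
\hat\delta_t(\theta) = \sum_{j\in\mathbb{Z}}\delta(j,t)\e^{i\theta j},
\]
a finite trigonometric polynomial since $\delta(\cdot,t)$ has finite support. Then $\delta(j,t) = \frac{1}{2\pi}\int_{-\pi}^{\pi}\hat\delta_t(\theta)\e^{-ij\theta}\,d\theta$, while the target Gaussian density equals $\frac{1}{2\pi}\int_{\mathbb{R}}\e^{-\cum_2(t)\theta^2/2 - ij\theta}\,d\theta$, so the task reduces to proving the quantitative approximation $\hat\delta_t(\theta)\approx\e^{-\cum_2(t)\theta^2/2}$, with enough uniformity to survive integration against $\e^{-ij\theta}$ for all~$j$. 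The recurrence~\eqref{eqn_density_recurrence} translates into the functional equations
\[
\hat\delta_{2t}(\theta) = \hat\delta_t(\theta),\qquad \hat\delta_{2t+1}(\theta) = \tfrac{1}{2}\e^{i\theta}\hat\delta_t(\theta) + \tfrac{1}{2}\e^{-i\theta}\hat\delta_{t+1}(\theta),
\]
the master equations for everything that follows.

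Next comes a systematic analysis of the cumulants $\cum_k(t)$, defined by $\log\hat\delta_t(\theta)=\sum_{k\ge 1}(i\theta)^k\cum_k(t)/k!$ near the origin. The recurrence forces $\cum_1(t)\equiv 0$, so the distribution is centered; the second cumulant satisfies the recurrence stated in the theorem, and the lower bound $\cum_2(t)\gtrsim M$ follows because each of the $M$ maximal blocks of~$\tL$s in~$t$ contributes at least one ``$+1$'' during the reduction of~$t$ to~$1$. For higher cumulants the plan is to prove a uniform bound of shape $|\cum_k(t)|\le C^k\,k!\,M$ with an absolute constant~$C$, inductively from the master equations via the standard moment-cumulant combinatorics. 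This yields, for small~$\theta$,
\[
\log\hat\delta_t(\theta) = -\tfrac{1}{2}\cum_2(t)\theta^2 + R_t(\theta),\qquad |R_t(\theta)|\lesssim M|\theta|^3.
\]

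Splitting the Fourier integral at a threshold $\theta_0$ of order $M^{-1/3}\log M$, the inner region $|\theta|\le\theta_0$ reproduces the Gaussian main term, with a remainder of $\LandauO\bigl(M^{-1}(\log M)^4\bigr)$ obtained by tracking one further Edgeworth-type correction; the tails of the target Gaussian outside $[-\theta_0,\theta_0]$ are superpolynomially small. The hard part is the outer region $\theta_0<|\theta|\le\pi$, where a uniform bound of type $|\hat\delta_t(\theta)|=\LandauO\bigl(\e^{-c(\log M)^2}\bigr)$ is required. The obstacle is that the master equation couples $\hat\delta_t$ with $\hat\delta_{t+1}$, precluding a direct multiplicative contraction on a single trajectory. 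My plan is to exploit cancellation in the combination $\tfrac{1}{2}\e^{i\theta}\hat\delta_t+\tfrac{1}{2}\e^{-i\theta}\hat\delta_{t+1}$: for~$\theta$ away from $0\bmod\pi$ the two summands cannot perfectly reinforce, producing a definite decrement of the squared modulus at each odd step in the backward reduction of~$t$, and a combinatorial count aligned with the block structure shows that the cumulative decrement grows proportionally to~$M$ rather than to~$\log t$. This is precisely where the quantitative strengthening over Emme--Hubert~\cite{EH2018} takes place, and combining the two regions yields Theorem~\ref{thm_normal}.
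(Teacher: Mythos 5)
Your outline is the same as the paper's in its architecture: Fourier inversion, a Gaussian approximation of the characteristic function on an inner interval, and exponential smallness on the outer interval with decay rate proportional to the number $M$ of blocks (the outer bound you describe is exactly \cite[Lemma~2.7]{S2020}, restated as Lemma~\ref{lem_gamma_tail}, so you could simply cite it rather than reprove it; your sketch of a reproof glosses over the fact that the recurrence couples $\gamma_t$ with $\gamma_{t+1}$, which forces a two-dimensional induction on the pair). The genuine gap is in the inner region, and it is precisely the issue the paper goes out of its way to avoid. You write $\log\hat\delta_t(\theta)=-\tfrac12\cum_2(t)\theta^2+R_t(\theta)$ with $\lvert R_t(\theta)\rvert\lesssim M\lvert\theta\rvert^3$ on $\lvert\theta\rvert\le\theta_0$ with $\theta_0\asymp M^{-1/3}\log M$. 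This presupposes (i) that $\hat\delta_t$ does not vanish on that interval, so that the logarithm and the cumulant series are defined there, and (ii) a bound on \emph{all} cumulants of the shape $\lvert\cum_k(t)\rvert\le C^k k!\,M$ to sum the tail of the series. Neither is available: the Taylor argument only gives nonvanishing for $\lvert\theta\rvert\ll M^{-1/2}$, which is strictly smaller than any admissible $\theta_0$ (the Gaussian tail beyond $\theta_0$ must be negligible, forcing $\theta_0\gg M^{-1/2}$), and near the edge of the inner region the error $M\theta_0^3$ dwarfs $\exp(-\cum_2(t)\theta_0^2/2)$, so one cannot deduce nonvanishing from the approximation itself. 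The all-orders cumulant bound is likewise not ``standard moment--cumulant combinatorics'': the recurrence for $\cum_k(2t+1)$ involves nonlinear cross-terms in lower-order \emph{differences} $\cum_j(t)-\cum_j(t+1)$, and the paper proves the needed bounds only for $k\le5$ (Lemma~\ref{lem_aj_diff_bounds}, Corollary~\ref{cor_aj_bounds}) and explicitly only \emph{conjectures} the structural property for higher $k$.

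The paper's fix is to never take a logarithm of $\gamma_t$ on the inner interval: it bounds the difference of the functions themselves, $\lvert\gamma_t(\vartheta)-\exp(-\cum_2(t)(\tau\vartheta)^2/2)\rvert\le CM\vartheta^3$ (Proposition~\ref{prp_tildegamma_est2}), by an induction along the binary expansion using the matrix form of the recurrence~\eqref{eqn_tilde_gamma_rec}; this requires only the finitely many cumulant differences already controlled and is valid regardless of where $\gamma_t$ vanishes. With that substitution your plan closes (take $\theta_0=M^{-1/2}\log M$, so that $\int_0^{\theta_0}M\vartheta^3\,\mathrm d\vartheta\ll M^{-1}(\log M)^4$ without any Edgeworth correction; note also that at your threshold $M^{-1/3}\log M$ the bound $M\lvert\theta\rvert^3$ integrates to $M^{-1/3}(\log M)^4$, which is too large for the stated error term, so the threshold and the order of the approximation must be rebalanced in any case).
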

Concerning the effectiveness of the constants, we refer to the remark after Theorem~\ref{thm_main}.
We will see in Corollary~\ref{cor_aj_bounds} and in Lemma~\ref{lem_a2_lower_bound} that
\[M\leq \cum_2(t) \leq CM\]
for some constant $C$.
Therefore, the main term dominates the error term for large $M$ if
\[\lvert j\rvert\leq \frac 12 \sqrt{M\log M}.\]
Note that the factor $1/2$ is arbitrary and any value $\rho<1$ is good enough
(for $M$ larger than some bound depending on $\rho$).
Moreover, in the statement of Theorem~\ref{thm_normal}, the lower bound $M > M_0$ is, in fact, not needed, as it can be taken care of by the constant $C$ in the error term.
Simply choose $C$ so large that the error term is greater than $1$ (for example, $C=M_0$ is sufficient, since $\delta(j,t) \leq 1$).
We decided to keep the theorem as it is,
since we feel that increasing a constant only for reasons of brevity is somewhat artificial.

Without giving a full proof we note that, by summation, this theorem can be used for proving a statement comparing $\Delta(j,t)=\sum_{j'\geq j}\delta(j',t)$ and the Gaussian $\Phi\bigl(-j/\sqrt{\cum_2(t)}\bigr)$.
This leads to a sharpening of the main result in Emme and Hubert~\cite{EH2018}.
By summing the asymptotic formula in Theorem~\ref{thm_normal} from $0$ to $\sqrt{M}\log M$, we also obtain the following corollary.
\begin{corollary}
There exists a constant $C$ such that
\[c_t\geq 1/2-CM^{-1/2}\bigl(\log M\bigr)^5\]
for all $t\geq 1$, where $M$ is the number of maximal blocks of $\tL$s in $t$.
\end{corollary}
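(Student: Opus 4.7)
The plan is to extract the corollary directly from Theorem~\ref{thm_normal} by summing the pointwise Gaussian approximation over the range where it dominates the error, and to use the nonnegativity of $\delta(\cdot,t)$ to discard everything outside. First I would set $J:=\lfloor\sqrt{M}\log M\rfloor$ and write $G(j):=(2\pi\cum_2(t))^{-1/2}\exp\bigl(-j^2/(2\cum_2(t))\bigr)$. Then~\eqref{eqn_ct_sum} combined with $\delta(j,t)\geq 0$ gives the one-sided truncation $c_t\geq\sum_{j=0}^{J}\delta(j,t)$, and on this range Theorem~\ref{thm_normal} replaces the sum by $\sum_{j=0}^{J}G(j)$ at a cost of $(J+1)\cdot\LandauO\bigl(M^{-1}(\log M)^4\bigr)=\LandauO\bigl(M^{-1/2}(\log M)^5\bigr)$, which already matches the error allowed in the corollary.

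The remaining task is to show that $\sum_{j=0}^{J}G(j)\geq 1/2$ up to a quantity negligible compared to $M^{-1/2}(\log M)^5$. For this I would use the two-sided bound $M\leq\cum_2(t)\leq CM$ (Corollary~\ref{cor_aj_bounds} and Lemma~\ref{lem_a2_lower_bound}) together with Poisson summation, yielding $\sum_{j\in\mathbb Z}G(j)=1+\LandauO(\e^{-2\pi^2 M})$. Combined with the symmetry $G(-j)=G(j)$, this gives
\[\sum_{j\geq 0}G(j)=\tfrac{1}{2}\bigl(1+G(0)\bigr)+\LandauO(\e^{-2\pi^2 M})\geq \tfrac{1}{2}.\]
The truncation $J$ was chosen precisely so that $J^2/\cum_2(t)\geq(\log M)^2/C$, and the standard Gaussian tail estimate then supplies $\sum_{j>J}G(j)=\LandauO\bigl(\exp(-(\log M)^2/(2C))\bigr)$, which decays faster than any polynomial in $1/M$. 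Subtracting this tail from the half-sum completes the lower bound on $\sum_{j=0}^{J}G(j)$.

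Assembling the three estimates produces $c_t\geq 1/2-CM^{-1/2}(\log M)^5$ for every $t$ with $M$ exceeding the constant $M_0$ of Theorem~\ref{thm_normal}; the finitely many smaller values of $M$ are absorbed into $C$ exactly as in the remark following that theorem, if necessary augmented by the numerical verification of Cusick's conjecture for small $t$ mentioned in the introduction to cover the boundary case $M=1$ where the error factor $(\log M)^5$ degenerates. There is no serious obstacle once Theorem~\ref{thm_normal} is in hand; the only point requiring real care is choosing $J$ so that the accumulated pointwise error $(J+1)M^{-1}(\log M)^4$ stays balanced against the Gaussian tail beyond $J$, and this balance is what fixes the exponent~$5$ in the final bound.
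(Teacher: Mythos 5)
Your proposal is correct and follows precisely the route the paper intends: the paper gives no details beyond ``by summing the asymptotic formula in Theorem~\ref{thm_normal} from $0$ to $\sqrt{M}\log M$'' and leaves the proof to the reader, and your accounting of the accumulated pointwise error $(J+1)\cdot\LandauO\bigl(M^{-1}(\log M)^4\bigr)$, the Poisson/symmetry argument for the Gaussian half-sum, and the tail beyond $J$ supplies exactly those details. The only caveat is the degenerate case $M=1$, where $(\log M)^5=0$ and no choice of $C$ (nor the finite numerical verification you invoke) covers the infinitely many such $t$ --- but that is an artifact of the corollary's statement rather than a flaw in your argument.
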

The proof is straightforward, and left to the reader.
This corollary is weaker than Theorem~\ref{thm_main}, but we stated it here since it gives a quantitative version of the main theorem in~\cite{S2020}.

\begin{notation}
In this paper, $0\in\mathbb N$.
We will use Big O notation, employing the symbol~$\LandauO$.
We let $\e(x)$ denote $e^{2\pi ix}$ for real $x$.
In our calculations, the number $\pi$ will often appear with a factor $2$.
Therefore we use the abbreviation $\tau=2\pi$.

We consider blocks of $\tO$s or $\tL$s in the binary expansion of an integer $t\in\mathbb N$.
Writing ``block of $\tL$s of length $\nu$ in $t$'', we always mean a maximal subsequence $\varepsilon_\mu=\varepsilon_{\mu+1}=\cdots=\varepsilon_{\mu+\nu-1}=1$ (where maximal means that $\varepsilon_{\mu+\nu}=0$ and either $\mu=0$ or $\varepsilon_{\mu-1}=0$).
  ``Blocks of $\tO$s of length $\nu$ in $t$'' are subsequences $\varepsilon_\mu=\cdots=\varepsilon_{\mu+\nu-1}=0$ such that $\varepsilon_{\mu+\nu}=1$ and either $\mu=0$ or $\varepsilon_{\mu-1}=1$.
%$\mu\geq 1$ and $\varepsilon_{\mu-1}=\varepsilon_{\mu+\nu}=1$.
We call blocks of zeros bordered by $\tL$s on both sides ``inner blocks of $\tO$s''.
For example, $2^kn$ and $n$ have the same number of inner blocks of $\tO$s.
The \emph{number of blocks in} $t$ is the sum of the number of blocks of $\tL$s and the number of blocks of $\tO$s.

All constants in this paper are absolute and effective. The letter $C$ is often used for constants; occurrences of $C$ at different positions need not necessarily designate the same value.
\end{notation}

In the remainder we give the proof of our main result, Theorem~\ref{thm_main}, followed by the proof of Theorem~\ref{thm_normal}.

%{{{ subsection Acknowledgements
\subsubsection*{Acknowledgments.}
On one of his first days as a PhD student in 2011, the first author was introduced to Cusick's conjecture by Johannes F.\ Morgenbesser, whom he wishes to thank at this point.
This conjecture has ever since been a source of inspiration and motivation to him.
We also wish to thank Michael Drmota, Jordan Emme, Wolfgang Steiner, and Thomas Stoll for fruitful discussions on the topic.
Finally, we thank Thomas W.~Cusick for constant encouragement and interest in our work.
%}}}
%{{{ section: proof of the main theorem
\section{Proof of the main theorem}

The proof of our main Theorem~\ref{thm_main} is split into several parts. 
The main idea is to work with the cumulant generating function of the probability distribution given by the densities $\delta(j,t)$, which we define in Section~\ref{sec:charcumgf}. 
The crucial observation later on is that it is sufficient to work with an approximation using only the cumulants up to order $5$.
This approximation is analyzed in Section~\ref{sec:approx} and used in Section~\ref{sec:intct} inside an explicit integral representation of $c_t$ to prove our main result up to an exceptional set of $t$s.
It remains to prove that these exceptional values, which are defined by the cumulants of order $2$ and $3$, satisfy an inequality involving the cumulants of order $4$ and $5$. 
For this reason, we needed to choose an approximation of the cumulant generating function up to order $5$.
Thus, in Section~\ref{sec:determinintexceptionalset} we determine this exceptional set and in Section~\ref{sec:boundsK4K5} we prove bounds on the cumulants of order $4$ and $5$. 
Finally, in Section~\ref{sec:endmainproof} we combine all ingredients to prove the inequality.

\subsection{Characteristic function %added
and cumulant generating function}
\label{sec:charcumgf}

We begin with the definition of the characteristic function of the probability distribution given by the densities $\delta(j,t)$.
In particular, we use the following variant, involving a scaling factor $\tau=2\pi$.
For $t\geq 0$ and $\vartheta\in\mathbb R$ we define
%{{{ eqn_gamma_def start
\begin{equation*}%\label{eqn_gamma_def}
\gamma_t(\vartheta)=\sum_{j\in\mathbb Z}\delta(j,t)\e(j\vartheta).
\end{equation*}
%}}} eqn_gamma_def end
Since $\delta(\bl,t)$ defines a probability distribution and $\lvert\e(x)\rvert\le1$ for real $x$,
we may interchange summation and integration by the dominated convergence theorem:
%{{{ eqn_gamma_integral start
\begin{equation}\label{eqn_gamma_integral}
\begin{aligned}
\delta(j,t)&=\sum_{k\in\mathbb Z}\delta(k,t)\cdot
\left\{\begin{array}{ll}1,&k=j;\\0,&k\neq j\end{array}\right\}
=
\sum_{k\in\mathbb Z}\delta(k,t)
\int_{-1/2}^{1/2}\e((k-j)\vartheta)\,\mathrm d\vartheta
\\&=
\int_{-1/2}^{1/2}
\e(-j\vartheta)\sum_{k\in\mathbb Z}\delta(k,t)\e(k\vartheta)\,\mathrm d\vartheta
=\int_{-1/2}^{1/2} \gamma_t(\vartheta)\e(-j\vartheta)\,\mathrm d\vartheta.
\end{aligned}
\end{equation}
%}}} eqn_gamma_integral end
The recurrence~\eqref{eqn_density_recurrence} directly carries over to the characteristic functions.
For all $t\geq 0$, we have
%{{{ eqn_gamma_recurrence start
\begin{equation}\label{eqn_gamma_recurrence}
\begin{aligned}
\gamma_{2t}(\vartheta)&=\gamma_t(\vartheta),\\
\gamma_{2t+1}(\vartheta)&=\frac{\e(\vartheta)}2\gamma_t(\vartheta)+\frac{\e(-\vartheta)}2\gamma_{t+1}(\vartheta),
\end{aligned}
\end{equation}
%}}} eqn_gamma_recurrence end
and in particular
%{{{ eqn_gamma_1 start
\begin{equation}\label{eqn_gamma_1}
\gamma_1(\vartheta)=\frac{\e(\vartheta)}{2-\e(-\vartheta)}.
\end{equation}
%}}} eqn_gamma_1 end
Therefore, for all $t\geq 1$, we have
\[\gamma_t(\vartheta)=\omega_t(\vartheta)\gamma_1(\vartheta),\]
where $\omega_t$ is a trigonometric polynomial such that $\omega_t(0)=1$.
These polynomials satisfy the same recurrence relation as $\gamma_t$.
In particular, noting also that the denominator $2-\e(-\vartheta)$ is nonzero near $\vartheta=0$, we have $\realpart \gamma_t(\vartheta)>0$ for $\vartheta$ in a certain disk
\[D_t=\{\vartheta\in\mathbb C:\lvert \vartheta\rvert<r(t)\},\]
where $r(t)>0$. It follows that
%{{{ eqn_dt_def
\begin{equation}\label{eqn_dt_def}
K_t = \log\circ\,\gamma_t
\end{equation}
%}}} eqn_dt_def
is analytic in $D_t$ and therefore there exist complex numbers $\cum_j(t)$ for $j\in\mathbb N$ such that
\begin{equation}
\label{eq_gammat_cum}
\gamma_t(\vartheta)
=\exp(K_t(\vartheta))
=\exp\left(\sum_{j\geq 0}\frac{\cum_j(t)}{j!}(i\tau\vartheta)^j\right)
\end{equation}
for all $\vartheta\in D_t$.
These numbers $\cum_j(t)$ are the \emph{cumulants} of the probability distribution defined by $\delta(\bl,t)$ (up to a scaling by $\tau$); see, e.g.,~\cite{B2012}.
They are real numbers since characteristic functions are Hermitian: $\gamma_t(\vartheta)=\overline{\gamma_t(-\vartheta)}$.
The real-valuedness also follows directly from the fact that cumulants are defined via the logarithm of the moment generating function, which has real coefficients.
The cumulant $\cum_2(t)$ is the variance: we have
\begin{equation}\label{eqn_A2_moment}
\cum_2(t)=\sum_{j\in\mathbb Z}j^2\delta(j,t).
\end{equation}
For $t=0$, we have $\cum_j(t)=0$ for all $j\geq 0$, as $\delta(k,0)=1$ if $k=0$ and $\delta(k,0)=0$ otherwise.
The recurrence~\eqref{eqn_gamma_recurrence} shows that
\[\gamma_t(\vartheta)=1+\mathcal O(\vartheta^2)\]
at $0$, which implies $\cum_0(t)=\cum_1(t)=0$.
Let us write
%{{{ eqn_abcA_rewriting start
\begin{equation}\label{eqn_abcA_rewriting}
x_j=\cum_j(t),\quad y_j=\cum_j(t+1),\quad\mbox{and}\quad
z_j=\cum_j(2t+1).
\end{equation}
%}}} eqn_abcA_rewriting end
Next, we will express the coefficients $z_j$ as functions of the coefficients $x_j$ and $y_j$.
Therefore we substitute the cumulant representation from~\eqref{eq_gammat_cum} for $\gamma_t(\vartheta)$ into the recurrence~\eqref{eqn_gamma_recurrence} and obtain that these quantities are related via the fundamental identity
{ \everymath={\displaystyle}
\begin{equation}\label{eqn_coeff_rec_exp}
\begin{aligned}
\hspace{4em}&\hspace{-4em}
\exp\left(\frac{z_2}2(i\tau\vartheta)^2+\frac{z_3}6(i\tau\vartheta)^3+\cdots\right)
\\&
\begin{array}{ll@{\hspace{1mm}}l@{\hspace{0em}}l@{\hspace{0em}}l@{\hspace{0em}}l}
&=&\frac12
\exp\Bigl(&i\,\!\tau\vartheta+\frac{x_2}2&(i\tau\vartheta)^2+\frac{x_3}6&(i\tau\vartheta)^3+\cdots\Bigr)
\\[2mm]&
+&\frac 12
\exp\Bigl(-&i\,\!\tau\vartheta+\frac{y_2}2&(i\tau\vartheta)^2+\frac{y_3}6&(i\tau\vartheta)^3+\cdots\Bigr),
\end{array}
\end{aligned}
\end{equation}%
}%
valid for $\vartheta\in D=D_t\cap D_{t+1}\cap D_{2t+1}$.
From this equation, we derive the following lemma by comparing coefficients of the appearing analytic functions.
%{{{ Lemma lem_exponent_rec
\begin{lemma}\label{lem_exponent_rec}
Assume that $t\geq 0$ and let $x_j$, $y_j$, and $z_j$ be defined by~\eqref{eqn_abcA_rewriting}.
We have
\begin{align}
z_2&=\frac{x_2+y_2}2+1;\label{eqn_coeff_2}\\
z_3&=\frac{x_3+y_3}2+\frac32(x_2-y_2);\label{eqn_coeff_3}\\
z_4&=\frac{x_4+y_4}2+2(x_3-y_3)+\frac 34(x_2-y_2)^2-2;\label{eqn_coeff_4}\\
z_5&=\frac{x_5+y_5}2+\frac 52(x_4-y_4)+\frac 52(x_2-y_2)(x_3-y_3)-10(x_2-y_2).
\label{eqn_coeff_5}
\end{align}
In particular,
\begin{equation}\label{eqn_fubini}
\cum_2(1)=2,\quad \cum_3(1)=-6,\quad \cum_4(1)=26,\quad \cum_5(1)=-150.
\end{equation}
\end{lemma}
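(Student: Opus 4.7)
The plan is to read equation~\eqref{eqn_coeff_rec_exp} as an identity of convergent power series in $u := i\tau\vartheta$ on the common disk $D$ and to match coefficients of $u^k$ for $k=2,3,4,5$. Writing $X(u) = \sum_{j\geq 2} x_j u^j/j!$, and analogously $Y(u)$ and $Z(u)$, the identity becomes
\[
e^{Z(u)} \;=\; \tfrac12\, e^{u+X(u)} \;+\; \tfrac12\, e^{-u+Y(u)}.
\]
I would expand the right-hand side to order $u^5$ by computing $e^{\pm u}$ to fifth order, multiplying by $e^{X(u)}$ and $e^{Y(u)}$ each truncated at $u^5$, and averaging. On the left-hand side, I would substitute the series for $Z(u)$ and expand the exponential. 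Matching the coefficient of $u^k$ for $k=2,3,4,5$ in turn yields a linear equation for $z_k$ (all strictly lower-order unknowns having already been solved for in the previous steps), which gives the four claimed identities~\eqref{eqn_coeff_2}--\eqref{eqn_coeff_5}.

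The main technical obstacle is sheer bookkeeping: for $z_4$ and $z_5$ many cross-terms enter from the expansions of $(u+X(u))^j$ and $(-u+Y(u))^j$ for $j$ up to $5$, and one must track carefully how the averaging combines them. A useful structural sanity check is that $z_{2m}$ should contain only symmetric combinations $x_j + y_j$ (together with pure constants coming from the powers of $\pm u$), whereas $z_{2m+1}$ should contain only antisymmetric combinations $x_j - y_j$; this reflects the sign asymmetry between the two summands on the right and catches most arithmetic slips. The appearance of the quadratic term $(x_2-y_2)^2$ in~\eqref{eqn_coeff_4} and of $(x_2-y_2)(x_3-y_3)$ in~\eqref{eqn_coeff_5} arises precisely from the quadratic term in the expansion of $\log(1+F)$ when one rewrites the right-hand side as $1 + F$ and takes the logarithm.

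For the initial values~\eqref{eqn_fubini}, one applies the just-proved identities in the case $t = 0$. Since $\delta(\cdot,0)$ is concentrated at $0$, we have $\gamma_0 \equiv 1$ and hence $\cum_j(0) = 0$ for all $j$. Setting $x_j = 0$ and $y_j = z_j = \cum_j(1)$ in~\eqref{eqn_coeff_2}--\eqref{eqn_coeff_5} gives four scalar equations that can be solved iteratively: \eqref{eqn_coeff_2} becomes $\cum_2(1) = \cum_2(1)/2 + 1$, so $\cum_2(1) = 2$; then \eqref{eqn_coeff_3} becomes $\cum_3(1) = \cum_3(1)/2 - 3$, giving $\cum_3(1) = -6$; substituting these two values into~\eqref{eqn_coeff_4} yields $\cum_4(1) = 26$; and finally~\eqref{eqn_coeff_5} yields $\cum_5(1) = -150$.
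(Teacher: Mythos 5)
Your proposal is correct and follows essentially the same route as the paper: expand both sides of~\eqref{eqn_coeff_rec_exp} as power series in $i\tau\vartheta$, compare coefficients order by order (substituting the already-derived lower-order recurrences into the left-hand side for $k=4,5$), and then obtain~\eqref{eqn_fubini} by setting $t=0$ so that $x_j=0$ and $y_j=z_j=\cum_j(1)$, solving the four equations iteratively. The symmetry/antisymmetry sanity check is a nice addition, and your iterative computation of the initial values matches the paper's.
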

%}}}
%{{{ proof of Lemma lem_exponent_rec
\begin{proof}
Extracting the coefficient of $\vartheta^2$ in~\eqref{eqn_coeff_rec_exp}, we obtain
\begin{align*}z_2&=\frac{1}{(i\tau)^2}[\vartheta^2]
\left(
1+i\,\tau\vartheta+\frac{x_2}2(i\tau \vartheta)^2+\frac 12\bigl(i\,\tau\vartheta+\frac{x_2}2(i\tau\vartheta)^2\bigr)^2\right.
\\&+\left.
1-i\,\tau\vartheta+\frac{y_2}2(i\tau \vartheta)^2+\frac 12\bigl(-i\,\tau\vartheta+\frac{y_2}2(i\tau\vartheta)^2\bigr)^2
\right)
=\frac{x_2+y_2}2+1,
\end{align*}
where $[x^k] \sum f_k x^k = f_k$ denotes the coefficient extraction operator and
this gives~\eqref{eqn_coeff_2}.

Similarly, we handle the higher coefficients.
%We want to find some general principle!
We proceed with $[\vartheta^3]K_t(\vartheta)$.
From~\eqref{eqn_coeff_rec_exp} we obtain by collecting the cubic terms
\begin{align*}
z_3&=\frac{3}{(i\tau)^3}
\left(
\frac{x_3}6(i\tau)^3+2\frac 12\frac{x_2}2(i\tau)^3+\frac 16(i\tau)^3
+\frac{y_3}6(i\tau)^3-2\frac12\frac{y_2}2(i\tau)^3-\frac 16(i\tau)^3
\right)
\\&=\frac{x_3+y_3}2+\frac 32(x_2-y_2),
\end{align*}
which is~\eqref{eqn_coeff_3}.
For the next coefficient $[\vartheta^4]K_t(\vartheta)$, we have to take the quadratic term of the exponential on the left hand side of~\eqref{eqn_coeff_rec_exp} into account.
This yields, inserting the recurrence for $z_2$ obtained before,
\begin{multline*}
\bigl[\vartheta^4\bigr]
\exp\left(\frac{z_2}2(i\tau\vartheta)^2+\frac{z_3}6(i\tau\vartheta)^3+\frac{z_4}{24}(i\tau\vartheta)^4\right)
=
\tau^4\left(\frac{z_4}{24}+\frac{z_2^2}8\right)
\\=
\tau^4\left(\frac{z_4}{24}+\frac 18+\frac{x_2+y_2}8+\frac {(x_2+y_2)^2}{32}\right).
\end{multline*}
The coefficient of $\vartheta^4$ of the right hand side of~\eqref{eqn_coeff_rec_exp} gives, collecting the quartic terms,
%{{{
\begin{align*}
\hspace{4em}&\hspace{-4em}
\frac {\tau^4}2\left(
\frac{x_4}{24}
+\frac 12\left(\frac{x_3}3+\frac{x_2^2}4\right)
+\frac 16\biggl(3\frac{x_2}2\biggr)
+\frac 1{24}
\right.
\\&
\left.
+\frac{y_4}{24}
+\frac 12\left(-\frac{y_3}3+\frac{y_2^2}4\right)
+\frac 16\left(3\frac{y_2}2\right)
+\frac 1{24}
\right)
\\&=\tau^4
\left(
\frac{x_4+y_4}{48}
+\frac{x_3-y_3}{12}
+\frac{x_2^2+y_2^2}{16}
+\frac{x_2+y_2}{16}
+\frac 1{24}
\right).
\end{align*}
%}}}
Equation~\eqref{eqn_coeff_4} follows.
Finally, we need the quintic terms.
The left hand side of~\eqref{eqn_coeff_rec_exp} yields
%{{{
\begin{align*}
\hspace{4em}&\hspace{-4em}
\bigl[\vartheta^5\bigr]
\exp\left(\frac{z_2}2(i\tau\vartheta)^2+\frac{z_3}6(i\tau\vartheta)^3+\frac{z_4}{24}(i\tau\vartheta)^4+\frac{z_5}{120}(i\tau\vartheta)^5\right)
=
(i\tau)^5\left(\frac{z_5}{120}+\frac{z_2z_3}{12}\right)
\\&=
(i\tau)^5\left(\frac{z_5}{120}+\frac 1{12}\left(\frac{x_2+y_2}2+1\right)\left(\frac{x_3+y_3}2+\frac32(x_2-y_2)\right)\right),
\end{align*}
%}}}
while the right hand side of~\eqref{eqn_coeff_rec_exp} yields
\begin{align*}
\hspace{1.5em}&\hspace{-1.5em}
\frac{(i\tau)^5}2
\left(
\frac{x_5}{120}+\frac 12\left(2\frac{x_2x_3}{12}+2\frac{x_4}{24}\right)
+\frac 16\left(3\frac{x_3}6+3\frac{x_2^2}4\right)
+\frac 1{24}\left(4\frac{x_2}2\right)
+\frac 1{120}
\right.\\&\left.
+\frac{y_5}{120}+\frac 12\left(2\frac{y_2y_3}{12}-2\frac{y_4}{24}\right)
+\frac 16\left(3\frac{y_3}6-3\frac{y_2^2}4\right)
-\frac 1{24}\left(4\frac{y_2}2\right)
-\frac 1{120}
\right)
\\&=
(i\tau)^5\left(
\frac{x_5+y_5}{240}
+\frac {x_4-y_4}{48}
+\frac{x_3+y_3}{24}
+\frac{x_2x_3+y_2y_3}{24}
+\frac{x_2^2-y_2^2}{16}
+\frac{x_2-y_2}{24}
\right),
\end{align*}
which implies~\eqref{eqn_coeff_5} after a short calculation.
Finally, we compute the values $\cum_2(1), \dots, \cum_5(1)$ by substituting $t=0$ in~\eqref{eqn_coeff_2}--\eqref{eqn_coeff_5}.
\end{proof}
%}}} proof of Lemma lem_exponent_rec end

In the following, we are not concerned with the original definition of $\cum_j$, involving a disk $D_t$ with potentially small radius.
Instead, we only work with the recurrences~\eqref{eqn_coeff_2}--\eqref{eqn_coeff_5}, which we restate here explicitly as a main result of this section:
\begin{align}
\cum_j(2t)&=\cum_j(t)\quad\mbox{for all }j\geq 0;\nonumber\\
\cum_2(2t+1)&=\frac12 \bigl(\cum_2(t)+\cum_2(t+1)\bigr)+1;\nonumber\\
\cum_3(2t+1)&=\frac12 \bigl(\cum_3(t)+\cum_3(t+1)\bigr)+\frac 32\bigl(\cum_2(t)-\cum_2(t+1)\bigr);\nonumber\\
\cum_4(2t+1)&=\frac12 \bigl(\cum_4(t)+\cum_4(t+1)\bigr)+2\bigl(\cum_3(t)-\cum_3(t+1)\bigr)\label{eqn_dt_rec}
\\&+\frac34\bigl(\cum_2(t)-\cum_2(t+1)\bigr)^2-2;\nonumber\\
\cum_5(2t+1)&=\frac12 \bigl(\cum_5(t)+\cum_5(t+1)\bigr)+\frac 52\bigl(\cum_4(t)-\cum_4(t+1)\bigr)\nonumber
\\&+\frac 52\bigl(\cum_2(t)-\cum_2(t+1)\bigr)\bigl(\cum_3(t)-\cum_3(t+1)\bigr)\nonumber
\\&-10\bigl(\cum_2(t)-\cum_2(t+1)\bigr),\nonumber
\end{align}
for all integers $t\geq 0$.
Note that $\cum_2(t)$ is obviously nonnegative, since it is a variance; this can also easily be seen from this recurrence.
\begin{remarks}
Let us discuss some properties and other appearances of $\cum_j(t)$.
\begin{enumerate}
\item
The sequence $\cum_2$ is $2$-\emph{regular}~\cite{AlloucheShallit1992, AS2003, AlloucheShallit2003}.
More precisely, we define
\begin{equation}\label{eqn_transition_matrices}
B_0=\left(\begin{matrix}
1&0&0\\
1/2&1/2&1\\
0&0&1
\end{matrix}\right),
\qquad
B_1=\left(\begin{matrix}
1/2&1/2&1\\
0&1&0\\
0&0&1
\end{matrix}\right)
\end{equation}
and
\[S(n)=\left(\begin{matrix}S_1(n)\\S_2(n)\\S_3(n)\end{matrix}\right)
=\left(\begin{matrix}\cum_2(n)\\\cum_2(n+1)\\1\end{matrix}\right).
\]
Then for all $n\geq 0$, the recurrence yields
\begin{equation}\label{eqn_k2_regular}
S(2n)=B_0S(n), \qquad S(2n+1)=B_1S(n).
\end{equation}
Thus $\kappa_2$ is $2$-regular, compare to~\cite[Theorem~2.2, item (e)]{AlloucheShallit1992}.

In this manner, we can also prove $2$-regularity of $\kappa_3,\kappa_4,\kappa_5$.
Considering for example the case $\kappa_5$, we introduce a sequence $S_\ell$ for each term that occurs in one of the recurrence formulas~\eqref{eqn_dt_rec}, such as $\kappa_2(n)\kappa_3(n+1)$; we see that it is sufficient to consider two $16\times 16$-matrices.
\item
The sequence $d_t=\cum_2(t)/2$
appears in another context too: it is the \emph{discrepancy of the van der Corput sequence}~\cite{DLP2005,S2018}, and it satisfies $d_1=1$, $d_{2t}=d_t$, $d_{2t+1}=(d_t+d_{t+1}+1)/2$.
We do not know yet if this connection between our problem and discrepancy is a meaningful one. 
After all, it is no big surprise that one of the simplest $2$-regular sequences
occurs in two different problems concerning the binary expansion.

\item By the same method of proof (or alternatively, by concatenating the power series for $\log$ and $\gamma_t(\vartheta)$) the list in Lemma~\ref{lem_exponent_rec} can clearly be prolonged indefinitely.
For the proof of our main theorem, however, we only need the terms up to $\cum_5$.
Without giving a rigorous proof, we note that this also shows that $\kappa_j$ is $2$-regular for all $j\geq 0$.
Note the important property that lower cumulants always appear as differences; we believe that this behavior persists for higher cumulants.
\item More explicit values of $\cum_j(1)$ can be easily computed from the closed form~\eqref{eqn_gamma_1}. 
Note that by~\eqref{eqn_deltaj1} we know that these numbers are the cumulants of a geometric distribution with parameter $p=1/2$ and given by the OEIS sequence~\href{http://oeis.org/A000629}{\texttt{A000629}} with many other combinatorial connections.
\end{enumerate}
\end{remarks}

In the next section we analyze an approximation of the cumulant generating function $\gamma_t(\vartheta)$ anticipating the fact that it captures all important properties for the subsequent proof.

%{{{ subsection An approximation to the CGF
\subsection{An approximation of the cumulant generating function}
\label{sec:approx}
Let us define the following approximation of $\gamma_t$.
Set
%{{{ eqn_gammaprime_def 
\begin{equation}\label{eqn_gammaprime_def}
\gammap_t(\vartheta)=\exp\left(\sum_{2\leq j\leq 5}\frac{\cum_j(t)}{j!}(i\tau\vartheta)^j\right).
\end{equation}
%}}} eqn_gammaprime_def end
We are going to replace $\gamma_t$ by $\gammap_t$, and for this purpose we have to bound the difference
\[\widetilde\gamma_t(\vartheta)=\gamma_t(\vartheta)-\gammap_t(\vartheta).\]
Clearly, we have $\widetilde\gamma_{2t}(\vartheta)=\widetilde\gamma_t(\vartheta)$.
Moreover,
%{{{ eqn_tilde_gamma_rec
\begin{equation}\label{eqn_tilde_gamma_rec}
\begin{aligned}
\widetilde\gamma_{2t+1}(\vartheta)&=\frac{\e(\vartheta)}2\bigl(\widetilde\gamma_t(\vartheta)+\gammap_t(\vartheta)\bigr)
+\frac{\e(-\vartheta)}2\bigl(\widetilde\gamma_{t+1}+\gammap_{t+1}(\vartheta)\bigr)-\gammap_{2t+1}(\vartheta)\\
&=\frac{\e(\vartheta)}2\widetilde\gamma_t(\vartheta)+\frac{\e(-\vartheta)}2\widetilde\gamma_{t+1}(\vartheta)+\xi_t(\vartheta),
\end{aligned}
\end{equation}
%}}} eqn_tilde_gamma_rec end
where
%{{{ eqn_xi_def start
\begin{equation}\label{eqn_xi_def}
\xi_t(\vartheta)=
\frac{\e(\vartheta)}2
\gammap_t(\vartheta)
+\frac{\e(-\vartheta)}2
\gammap_{t+1}(\vartheta)
-\gammap_{2t+1}(\vartheta).\end{equation}
%}}} eqn_xi_def end
We prove the following rough bounds on differences of the cumulants $\cum_j$.
%{{{ lem_aj_diff_bounds
\begin{lemma}\label{lem_aj_diff_bounds}
We have
\begin{align}
\lvert \cum_2(t+1)-\cum_2(t)\rvert&\leq 2;\label{eqn_a2_difference_bound}\\
\lvert \cum_3(t+1)-\cum_3(t)\rvert&\leq 6;\label{eqn_a3_difference_bound}\\
\lvert \cum_4(t+1)-\cum_4(t)\rvert&\leq 28;\label{eqn_a4_difference_bound}\\
\lvert \cum_5(t+1)-\cum_5(t)\rvert&\leq 240.\label{eqn_a5_difference_bound}
\end{align}
\end{lemma}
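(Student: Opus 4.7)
The plan is to prove all four bounds jointly by induction on $t$, setting $D_j(t) := \cum_j(t+1)-\cum_j(t)$ for $j\in\{2,3,4,5\}$. Since $\cum_j(2s)=\cum_j(s)$ and hence $\cum_j(2s+2)=\cum_j(s+1)$, substituting into~\eqref{eqn_dt_rec} and subtracting one immediately obtains self-contained recurrences for $D_j(2s)$ and $D_j(2s+1)$, whose right-hand sides are polynomials in $D_2(s),\dots,D_j(s)$. Explicitly,
\begin{align*}
D_2(2s)&=\tfrac12 D_2(s)+1, & D_2(2s+1)&=\tfrac12 D_2(s)-1,\\
D_3(2s)&=\tfrac12 D_3(s)-\tfrac32 D_2(s), & D_3(2s+1)&=\tfrac12 D_3(s)+\tfrac32 D_2(s),\\
D_4(2s)&=\tfrac12 D_4(s)-2 D_3(s)+\tfrac34 D_2(s)^2-2, & D_4(2s+1)&=\tfrac12 D_4(s)+2 D_3(s)-\tfrac34 D_2(s)^2+2,
\end{align*}
and analogously for $D_5$, where the cross-term takes the form $\pm\bigl(\tfrac52 D_2(s)D_3(s)+10D_2(s)\bigr)$ depending on parity.

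The base case $t=0$ is immediate from~\eqref{eqn_fubini}: since $\cum_j(0)=0$ we have $D_j(0)=\cum_j(1)$, giving $|D_2(0)|=2$, $|D_3(0)|=6$, $|D_4(0)|=26$, $|D_5(0)|=150$, all within the claimed bounds. For $j=2$ and $j=3$ the inductive step is routine, a direct application of the triangle inequality: $\tfrac12\cdot 2+1=2$ and $\tfrac12\cdot 6+\tfrac32\cdot 2=6$, matching the targets exactly.

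The main obstacle is the bound for $j=4$, where naive term-by-term bounding of $|D_4(2s)|$ yields $\tfrac12\cdot 28+2\cdot 6+\tfrac34\cdot 4+2=31$, exceeding the desired $28$. The remedy is to bundle the nonnegative quadratic with the constant: since $\tfrac34 D_2(s)^2\in[0,3]$, the combined quantity $\tfrac34 D_2(s)^2-2$ lies in $[-2,1]$, and therefore $D_4(2s)\in[-14-12-2,\,14+12+1]=[-28,27]$. The mirror calculation gives $D_4(2s+1)\in[-27,28]$, so $|D_4|\leq 28$ in both cases. For $j=5$, term-by-term bounding already closes the induction, since $\tfrac{240}{2}+\tfrac52\cdot 28+\tfrac52\cdot 2\cdot 6+10\cdot 2=240$ exactly. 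The slack in the $j=4$ constant is therefore the only delicate point; everything else is mechanical.
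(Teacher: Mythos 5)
Your proof is correct and follows essentially the same route as the paper: a joint induction on $t$ using the recurrences derived from~\eqref{eqn_dt_rec}, with the same key observation at $j=4$ that the nonnegative square $\tfrac34 D_2(s)^2\in[0,3]$ must be bundled with the constant $-2$ (the paper phrases this as the last three summands being bounded by $14$ in absolute value, ``using \dots the fact that all cumulants are real numbers''). The only cosmetic difference is that you write out the self-contained recurrences for $D_j$ explicitly and verify the base case $t=0$ from~\eqref{eqn_fubini}, whereas the paper leaves these steps implicit.
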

%}}} lem_aj_diff_bounds end
%{{{ proof of lem_aj_diff_bounds
\begin{proof}
We prove these statements by induction, inserting the recurrences~\eqref{eqn_dt_rec}.
We have
\[\cum_2(2t+1)-\cum_2(2t)=
\frac{\cum_2(t)+\cum_2(t+1)}2+1-\cum_2(t)
=
\frac{\cum_2(t+1)-\cum_2(t)}2+1
\]
and
\[\cum_2(2t+2)-\cum_2(2t+1)=
\cum_2(t+1)-\frac{\cum_2(t)+\cum_2(t+1)}2+1
=
\frac{\cum_2(t+1)-\cum_2(t)}2-1.
\]
Then, by induction, the first statement is an easy consequence.
Next, we consider the second inequality. 
From~\eqref{eqn_dt_rec} we get
\begin{align*}
\cum_3(2t+1)-\cum_3(2t)
&=\frac{\cum_3(t+1)-\cum_3(t)}2-\frac 32\bigl(\cum_2(t+1)-\cum_2(t)\bigr),\\
\cum_3(2t+2)-\cum_3(2t+1)
&=\frac{\cum_3(t+1)-\cum_3(t)}2+\frac 32\bigl(\cum_2(t+1)-\cum_2(t)\bigr),
\end{align*}
and using the first part and induction, the claim follows.
Concerning~\eqref{eqn_a4_difference_bound},
\begin{equation*}
\begin{aligned}
\cum_4(2t+1)-\cum_4(2t)
&=\frac{\cum_4(t+1)-\cum_4(t)}2+2\bigl(\cum_3(t)-\cum_3(t+1)\bigr)
\\&+\frac34\bigl(\cum_2(t)-\cum_2(t+1)\bigr)^2-2,
\end{aligned}
\end{equation*}
and the last three summands add up to a value bounded by $14$ in absolute value, using the first two estimates and the fact that all cumulants are real numbers.
An analogous statement for $\cum_4(2t+2)-\cum_4(2t+1)$ holds.
This implies the third line.
Finally,
\begin{equation*}
\begin{aligned}
\hspace{3em}&\hspace{-3em}
\cum_5(2t+1)-\cum_5(2t)
=\frac{\cum_5(t+1)-\cum_5(t)}2+\frac52\bigl(\cum_4(t)-\cum_4(t+1)\bigr)
\\&+\frac52\bigl(\cum_2(t)-\cum_2(t+1)\bigr)\bigl(\cum_3(t)-\cum_3(t+1)\bigr)-10\bigl(\cum_2(t)-\cum_2(t+1)\bigr),
\end{aligned}
\end{equation*}
and the sum of the last three summands is bounded by $120$ in absolute value.
In complete analogy to the above, this implies~\eqref{eqn_a5_difference_bound}.
\end{proof}
%}}} proof of lem_aj_diff_bounds end
%{{{ cor_aj_bounds
\begin{corollary}\label{cor_aj_bounds}
There exists a constant $C$ such that
for all $t$ having $M$ blocks of $\tL$s we have
\[\lvert \cum_2(t)\rvert\leq CM,\quad
\lvert \cum_3(t)\rvert\leq CM,\quad
\lvert \cum_4(t)\rvert\leq CM,\quad
\lvert \cum_5(t)\rvert\leq CM.\]

\end{corollary}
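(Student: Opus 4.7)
The plan is to prove the bound by strong induction on $t$, using the recurrences~\eqref{eqn_dt_rec} and the difference estimates in Lemma~\ref{lem_aj_diff_bounds}. The base cases $t=0$ (all cumulants vanish, $M(0)=0$) and $t=1$ (the explicit values in~\eqref{eqn_fubini}) are immediate once $C$ is chosen large enough. For the inductive step with $t=2t'$ even, the identity $\cum_j(t)=\cum_j(t')$ together with $M(t)=M(t')$ closes the step at once.

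For odd $t=2t'+1$, the recurrence writes $\cum_j(t)$ as the average $(\cum_j(t')+\cum_j(t'+1))/2$ plus a correction term. For $j=2$ the correction is simply $+1$; for $j\in\{3,4,5\}$ it is a polynomial in the differences $\cum_i(t')-\cum_i(t'+1)$ for $i<j$, which are uniformly bounded by Lemma~\ref{lem_aj_diff_bounds}, so in all cases the correction is $\LandauO(1)$. The combinatorial key is the carry observation $|M(u+1)-M(u)|\leq 1$ (a routine case analysis of the trailing run of $\tL$s), which together with $M(t)-M(t')\in\{0,1\}$ yields $M(t')+M(t'+1)\leq 2M(t)+\LandauO(1)$. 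Plugging the induction hypothesis into the recurrence then gives $|\cum_j(t)|\leq CM(t)+\LandauO(1)$, and the additive constant is absorbed into $C$ using $M(t)\geq 1$ for $t\geq 1$. The bounds for $j=3,4,5$ are built on top of the bounds for the lower cumulants, again with all cross-terms controlled by Lemma~\ref{lem_aj_diff_bounds}.

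The main obstacle I expect is the ``tight'' subcase where $t'$ is odd and $M(t'+1)=M(t')$---which occurs exactly when $t'=2^j-1$ or when $t'$ has at least two zeros preceding its trailing $\tL$-block. In this case the above carry estimate saturates and the naive inductive step leaves a stray $+1$ that, unchecked, could accumulate along deep recursion branches. To close the argument here I would either strengthen the inductive hypothesis to a joint bound on $\cum_j(t)$ and $\cum_j(t+1)$ that tracks the slack available in each non-tight step, or exploit the closed-form behavior of the recurrence along runs of identical bits---as in $\cum_2(2^b-1)=4-2^{2-b}$, which remains uniformly bounded even as $b\to\infty$ although $M=1$---to absorb the residual directly.
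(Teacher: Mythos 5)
Your bit-by-bit strong induction does not close, and the obstacle is more pervasive than the ``tight subcase'' you isolate. At \emph{every} odd step $t=2t'+1$ the recurrence adds a correction of order $1$ on top of the average $(\kappa_j(t')+\kappa_j(t'+1))/2$, so even in the most favourable case $M(t')=M(t'+1)=M(t)$ the inductive hypothesis only yields $\lvert\kappa_j(t)\rvert\le CM(t)+c_0$ with $c_0>0$ absolute (and in general $CM(t)+C/2+c_0$). This cannot be ``absorbed into $C$'': you may not enlarge $C$ inside an induction whose hypothesis already uses that same $C$, and one would need $CM+c_0\le CM$. What the per-digit induction actually proves is a bound of order the number of $\tL$-digits of $t$ (i.e.\ $s(t)$, or $\LandauO(\log t)$), because a single block of $\tL$s of length $r$ triggers $r$ odd steps while contributing only $1$ to $M$; your own example $\kappa_2(2^b-1)=4-2^{2-b}$ shows the saving comes from geometric decay \emph{within} a block, which the hypothesis $\lvert\kappa_j\rvert\le CM$ does not record. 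You name two possible repairs but carry out neither, so the argument is incomplete as it stands.

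The paper avoids the issue by inducting on the number of blocks rather than on binary digits, consuming an entire block of $\tL$s in one step. Writing $u=2^rt+2^r-1$ for the integer obtained by appending $\tL^r$, one has $u+1=2^r(t+1)$, hence $\kappa_j(u+1)=\kappa_j(t+1)$, and the telescoping identity
\[
\kappa_j(u)=\kappa_j(t)+\bigl(\kappa_j(u)-\kappa_j(u+1)\bigr)-\bigl(\kappa_j(t)-\kappa_j(t+1)\bigr)
\]
bounds the increment per appended block by twice the uniform constants of Lemma~\ref{lem_aj_diff_bounds} (at most $480$ for $j=5$), while appending $\tO^r$ changes nothing. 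This is essentially your second suggested repair made precise: the uniform difference bounds $\lvert\kappa_j(t+1)-\kappa_j(t)\rvert=\LandauO(1)$ are exactly the device that converts per-digit accumulation into per-block accumulation. If you insist on a per-digit induction, the strengthened hypothesis would have to control $\kappa_j(t)-\kappa_j(t+1)$ jointly with $\kappa_j(t)$, which amounts to reproving Lemma~\ref{lem_aj_diff_bounds} inside the induction and then still extracting the per-block count.
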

%}}} cor_aj_bounds
%{{{ proof of cor_aj_bounds
\begin{proof}
We proceed by induction on the number of blocks of $\tL$s in $t$.
Appending $\tO^r$ to the binary expansion, there is nothing to show by the identity $\cum_j(2t)=\cum_j(t)$.
We append a block of $\tL$s of length $r$:
Using the following (trivial) identity
\[\cum_j\bigl(2^rt+2^r-1\bigr)
=\cum_j(t)+\left(\cum_j\bigl(2^rt+2^r-1\bigr)-\cum_j(t+1)\right)-\left(\cum_j(t)-\cum_j(t+1)\right),\]
and since $\cum_j\bigl(\bigl(2^rt+2^r-1\bigr)+1\bigr)=\cum_j(t+1)$ due to $\cum_j(2t)=\cum_j(t)$, the result follows by Lemma~\ref{lem_aj_diff_bounds}.
\end{proof}
%}}} proof of cor_aj_bounds
The following lower bound is~\cite[Lemma~3.1]{S2018}, and essentially contained in~\cite{DLP2005}; see also~\cite{EH2018}.
%{{{ lem_a2_lower_bound
\begin{lemma}\label{lem_a2_lower_bound}
Let $M$ be the number of blocks of $\tL$s in $t$.
Then $\cum_2(t)\geq M$.
\end{lemma}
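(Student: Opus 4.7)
The plan is to prove the sharper claim by strong induction on $t$: for every nonnegative integer $t$, $\cum_2(t) \geq M(t)$, where $M(t)$ denotes the number of maximal blocks of $\tL$s in the binary expansion of $t$. The base cases $t=0,1$ are immediate from $\cum_2(0)=0=M(0)$ and $\cum_2(1)=2\geq 1=M(1)$. For $t=2s$ even, the identity $\cum_2(2s)=\cum_2(s)$ combined with $M(2s)=M(s)$ closes that case by induction.

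The substantive case is $t=2s+1$ with $s\geq 1$, for which $s,s+1<t$, so the induction hypothesis applies to both. By the recurrence~\eqref{eqn_dt_rec},
\[
\cum_2(2s+1)=\frac{\cum_2(s)+\cum_2(s+1)}{2}+1 \geq \frac{M(s)+M(s+1)}{2}+1,
\]
and it therefore suffices to verify the combinatorial inequality
\begin{equation}\label{eqn_block_ineq_plan}
M(s)+M(s+1)+2 \geq 2\,M(2s+1).
\end{equation}

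To prove~\eqref{eqn_block_ineq_plan} I would rely on two elementary facts about the function $M$. First, $M(2s+1)=M(s)+1$ when $s$ is even (the trailing $\tO$ of $s$ is turned into the pattern $\tO\tL$, creating a new isolated block), while $M(2s+1)=M(s)$ when $s$ is odd (the trailing block of $\tL$s is merely extended by one). Second, $M(s+1)\geq M(s)-1$ for every $s\geq 0$: if $s$ ends in $\tO$ then $M(s+1)=M(s)+1$; if $s$ ends in $\tL^k$, then adding $1$ destroys that trailing block of $\tL$s and flips one $\tO$ higher up to $\tL$, which either creates a new block or extends an existing one, so the net change is at least $-1$. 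Plugging these into~\eqref{eqn_block_ineq_plan}: when $s$ is even the inequality reduces to $M(s+1)\geq M(s)$, and since the trailing $\tO$ of $s$ becomes an isolated $\tL$ in $s+1$ we in fact have $M(s+1)=M(s)+1$; when $s$ is odd it reduces to $M(s+1)\geq M(s)-2$, which is weaker than the general bound.

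The argument contains no genuine obstacle beyond careful bookkeeping, because the recurrence mixes the two maps $s\mapsto 2s+1$ and $s\mapsto s+1$, and one must track how the block-counting function transforms under each of them separately. Once those two transformation rules are pinned down, the case analysis is short, and the additive $+1$ in the recurrence for $\cum_2$ is precisely what compensates for the creation of a new maximal block of $\tL$s, producing equality in the extremal case (e.g.\ $s$ odd with $M(s+1)=M(s)$).
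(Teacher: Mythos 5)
Your argument is essentially correct, but it is worth noting that the paper does not prove this lemma at all: it is imported verbatim as Lemma~3.1 of the first author's earlier work (and is stated there to be essentially contained in the Drmota--Larcher--Pillichshammer paper on the van der Corput discrepancy, via the identification $d_t=\cum_2(t)/2$). So your self-contained strong induction on $t$, reducing the odd case to the combinatorial inequality $M(s)+M(s+1)+2\geq 2M(2s+1)$ and disposing of it by tracking how $M$ transforms under $s\mapsto 2s+1$ and $s\mapsto s+1$, is a genuinely useful addition rather than a rederivation; it makes the paper's dependence on the block structure of $t$ explicit and shows exactly where the additive $+1$ in the recurrence $\cum_2(2s+1)=\tfrac12(\cum_2(s)+\cum_2(s+1))+1$ pays for a newly created block. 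One local slip: your claim that for $s$ ending in $\tO$ one has $M(s+1)=M(s)+1$ because the trailing $\tO$ becomes an \emph{isolated} $\tL$ is false in general --- for $s=2=\tL\tO$ one gets $s+1=3=\tL\tL$ and $M(3)=M(2)=1$, since the new $\tL$ may merge with the block directly above it. The correct statement is $M(s+1)\in\{M(s),M(s)+1\}$ for even $s$, which still yields both the bound $M(s+1)\geq M(s)-1$ and the inequality $M(s+1)\geq M(s)$ needed in your even-$s$ case, so the proof survives; just replace the claimed equality by this two-case observation.
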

%}}} lem_a2_lower_bound end
We prove the following upper bound for $\widetilde\gamma_t(\vartheta)$, using the recurrence~\eqref{eqn_dt_rec} as an essential input.
This proposition is the central property in our proof of the main theorem, showing the crucial uniformity of our approximation.
%{{{ prp_tildegamma_est
\begin{proposition}\label{prp_tildegamma_est}
There exists a constant $C$
such that for $\lvert\vartheta\rvert\leq \min\left(M^{-1/6},\tau^{-1}\right)$ we have
\begin{equation*}%\label{eqn_paired_inequalities}
\begin{aligned}
\bigl\lvert\widetilde\gamma_t(\vartheta)\bigr\rvert
&\leq CM\vartheta^6,\\
\bigl\lvert \xi_t(\vartheta)\bigr\rvert
&\leq C\vartheta^6,
\end{aligned}
\end{equation*}
where $M$ is the number of blocks of $\tL$s in $t$.
\end{proposition}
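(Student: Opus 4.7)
The plan is to prove the two bounds in sequence: first $|\xi_t(\vartheta)|\leq C\vartheta^6$ via a Schwarz-type estimate, then $|\widetilde\gamma_t(\vartheta)|\leq CM\vartheta^6$ by induction on the binary structure of $t$ using~\eqref{eqn_tilde_gamma_rec}.

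\emph{Bounding $\xi_t$.} Set $\gammap_s = \exp L_s$ where $L_s(\vartheta)=\sum_{j=2}^5\cum_j(s)(i\tau\vartheta)^j/j!$, and factor
\[
\xi_t(\vartheta) = \gammap_{2t+1}(\vartheta)\bigl(r_t(\vartheta)-1\bigr),\qquad r_t(\vartheta) = \tfrac12\bigl(\exp u_t(\vartheta) + \exp v_t(\vartheta)\bigr),
\]
with $u_t=i\tau\vartheta+L_t-L_{2t+1}$ and $v_t=-i\tau\vartheta+L_{t+1}-L_{2t+1}$. Combining the cumulant recurrences~\eqref{eqn_dt_rec} with Lemma~\ref{lem_aj_diff_bounds} shows that each difference $\cum_j(t)-\cum_j(2t+1)$ and $\cum_j(t+1)-\cum_j(2t+1)$ for $2\leq j\leq 5$ is bounded by an absolute constant. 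Hence $u_t$, $v_t$ are degree-$5$ polynomials with uniformly bounded coefficients, and $|\exp u_t|,|\exp v_t|=\LandauO(1)$ on $|\vartheta|\leq\tau^{-1}$. By the very construction of $\cum_j(2t+1)$ in Lemma~\ref{lem_exponent_rec}, the function $r_t-1$ vanishes to order at least~$6$ at $\vartheta=0$; applying a Schwarz-type estimate on the disk $|\vartheta|\leq\tau^{-1}$ yields $|r_t(\vartheta)-1|\leq C\vartheta^6$ with absolute $C$. Finally, Lemma~\ref{lem_a2_lower_bound} and Corollary~\ref{cor_aj_bounds} give $\realpart L_{2t+1}(\vartheta)=-\cum_2(2t+1)\tau^2\vartheta^2/2+\cum_4(2t+1)\tau^4\vartheta^4/24\leq 0$ for $|\vartheta|$ below an absolute constant, so $|\gammap_{2t+1}|\leq 1$ and the bound on $|\xi_t|$ follows.

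\emph{Bounding $\widetilde\gamma_t$.} I would induct on $M=M(t)$, targeting $|\widetilde\gamma_t|\leq DM\vartheta^6$ for a constant $D$ to be chosen. Since $\widetilde\gamma_{2t'}=\widetilde\gamma_{t'}$ and $M(2t')=M(t')$, assume $t$ odd. The case $M=1$ (so $t=2^r-1$ up to trailing zeros) is handled by iterating~\eqref{eqn_tilde_gamma_rec} down to $\widetilde\gamma_1$, the latter being controlled by direct expansion of $\gamma_1-\gammap_1$. For $M\geq 2$, write $t=2^r m+2^r-1$ with $m$ even and $r\geq 1$. Iterating~\eqref{eqn_tilde_gamma_rec} exactly $r$ times over this $\tL$-block and using $\widetilde\gamma_{2^{r-i-1}(m+1)}=\widetilde\gamma_{m+1}$ at every step yields
\[
|\widetilde\gamma_t(\vartheta)|\leq 2^{-r}|\widetilde\gamma_m|+(1-2^{-r})|\widetilde\gamma_{m+1}|+2C\vartheta^6.
\]
Since $M(m)=M-1$, the induction hypothesis controls the first term. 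If $m$ ends in $\tL\tO$, then $M(m+1)=M-1$ and the bound closes at once for $D\geq 2C$. Otherwise $m=2^b m''$ with $b\geq 2$ and $m''$ odd, and $M(m+1)=M$; iterating~\eqref{eqn_tilde_gamma_rec} a further $b$ times on $\widetilde\gamma_{m+1}=\widetilde\gamma_{2^b m''+1}$ (whose LSB $\tL$-block has length one, followed by $b-1$ zeros and then $m''$) produces
\[
|\widetilde\gamma_{m+1}(\vartheta)|\leq 2^{-b}|\widetilde\gamma_{m''+1}|+(1-2^{-b})|\widetilde\gamma_{m''}|+2C\vartheta^6.
\]
Substituting back gives a \emph{convex} combination of $|\widetilde\gamma_{m''}|$ and $|\widetilde\gamma_{m''+1}|$ (the two coefficients $1-2^{-b}+2^{-b-r}$ and $2^{-b}-2^{-b-r}$ sum to exactly~$1$) plus an additive term $\leq 4C\vartheta^6$. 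Since $m''$ is odd, the structural inequality $M(m''+1)\leq M(m'')=M-1$ holds, so the induction hypothesis bounds both surviving terms by $D(M-1)\vartheta^6$, giving $|\widetilde\gamma_t|\leq D(M-1)\vartheta^6+4C\vartheta^6\leq DM\vartheta^6$ for $D\geq 4C$.

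\emph{Main obstacle.} The delicate step is the second subcase, where a single block-unrolling fails to decrease the $\tL$-block count. The fix is the two-level unrolling above, and the crucial algebraic observation is that after substitution the coefficients of the two surviving $\widetilde\gamma$-terms sum to exactly~$1$: this convexity is what keeps the inductive bound from blowing up, while the geometric factor $2^{-b}$ (coming from the inner block having length at least $2$ of zeros) is what made the sub-unrolling possible. The extension from $|\vartheta|\leq\tau^{-1}$ to the stated range $|\vartheta|\leq\min(M^{-1/6},\tau^{-1})$ is then automatic, since whenever $DM\vartheta^6\geq 2$ the trivial estimate $|\widetilde\gamma_t|\leq|\gamma_t|+|\gammap_t|\leq 2$ already suffices.
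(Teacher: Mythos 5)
Your argument is correct in its essentials but follows a genuinely different route from the paper's. For $\xi_t$, you factor out $\gammap_{2t+1}$ and invoke a Schwarz-type (maximum-modulus) estimate on the disk $\lvert\vartheta\rvert\le\tau^{-1}$, using only the uniform boundedness of the cumulant differences $\cum_j(2t+1)-\cum_j(2t)$ and $\cum_j(2t+1)-\cum_j(2t+2)$ from Lemma~\ref{lem_aj_diff_bounds}; the paper instead divides by $\gammap_t$ and majorizes the resulting power series coefficient by coefficient (bounding $[\vartheta^k]\exp\bigl(2((\tau\vartheta)+\cdots+(\tau\vartheta)^5)\bigr)$ explicitly). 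Your route is cleaner analytically but less explicit about constants. For $\widetilde\gamma_t$, the paper inducts on the binary \emph{length} of $t$ and carries the \emph{pair} $(\widetilde\gamma_t,\widetilde\gamma_{t+1})$ through the $2\times2$ matrices $A_0,A_1$ of row-sum norm $1$; because the hypothesis already controls $\widetilde\gamma_{t+1}$, the case where $m+1$ has as many $\tL$-blocks as $t$ never arises. You induct on the number of $\tL$-blocks and track only $\widetilde\gamma_t$, which forces exactly that problematic case; your two-level unrolling across the inner block of $\ge2$ zeros, with the observation that the surviving coefficients of $\lvert\widetilde\gamma_{m''}\rvert$ and $\lvert\widetilde\gamma_{m''+1}\rvert$ sum to $1$ (together with the structural fact $M(m''+1)\le M(m'')$ for odd $m''$), is a correct and rather elegant repair, at the cost of a longer case analysis. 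What the paper's pairing buys is precisely the avoidance of this repair.

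One imprecision to fix: your bound $\lvert\gammap_{2t+1}\rvert\le1$ holds only for $\lvert\vartheta\rvert$ below an absolute constant determined by $\cum_2\ge M$ and $\cum_4\le CM$, and that constant may be smaller than $\tau^{-1}$; to cover the whole stated range you must add that if $M^{-1/6}$ exceeds this constant then $M$ is bounded, whence $\lvert\gammap_{2t+1}\rvert\le\exp(\LandauO(M))=\LandauO(1)$ anyway. For the same reason your closing remark is backwards: the bounds do \emph{not} extend to all of $\lvert\vartheta\rvert\le\tau^{-1}$ uniformly in $M$ (for $\vartheta\asymp\tau^{-1}$ and $\cum_4\asymp M$ the quartic term can dominate and $\lvert\gammap_t\rvert$ can be exponentially large in $M$, so neither the $\xi$ estimate nor the trivial bound $\lvert\gammap_t\rvert\le1$ survives); fortunately no extension is needed, since the stated range $\lvert\vartheta\rvert\le\min\bigl(M^{-1/6},\tau^{-1}\bigr)$ is contained in the region where your argument does work.
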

%}}} prp_tildegamma_est end
%{{{ Proof of prp_tildegamma_est
\begin{proof}
From \eqref{eqn_gammaprime_def} and \eqref{eqn_tilde_gamma_rec} we see that by construction $\widetilde\gamma_t(\vartheta) = \mathcal{O}(\vartheta^6)$ and $\xi_t(\vartheta) = \mathcal{O}(\vartheta^6)$ as the Taylor coefficients at $\vartheta=0$ of $\gamma_t(\vartheta)$ and $\gamma_t'(\vartheta)$ up to $\vartheta^5$ are the same.
It remains to show that the constants are effective and uniform in $t$.
To begin with, there is a constant $C$ such that~\eqref{eqn_hyp_strenghened} holds for $t\in\{0,1\}$;
a numerical value can be extracted from the first few $\widetilde\gamma_t(\vartheta)$ and $\xi_t(\vartheta)$, which have explicit expansions.

We proceed by induction on the length $L$ of the binary expansion of $t$.
As induction hypothesis, we choose the following strengthened statement:
%{{{ eqn_hyp_strenghened
\begin{equation}\label{eqn_hyp_strenghened}
\begin{array}{c}
\begin{aligned}
\bigl\lvert\widetilde\gamma_t(\vartheta)\bigr\rvert&\leq 2CM\vartheta^6;\\
\bigl\lvert\widetilde\gamma_{t+1}(\vartheta)\bigr\rvert&\leq 2CM\vartheta^6;\\
\bigl\lvert\xi_t(\vartheta)\bigr\rvert&\leq C\vartheta^6
\end{aligned}
\\
\mbox{
\begin{minipage}[t]{0.8\textwidth}
\emph{for all $t$ whose binary expansion has a length bounded by $L$,
and for all real $\vartheta$ satisfying
$\lvert\vartheta\rvert\le1/\tau$ and $\lvert\vartheta\rvert\leq {M}^{-1/6}$, where $M$ is the number of blocks in $t$.}
\end{minipage}
}
\end{array}
\end{equation}
%}}} eqn_hyp_strenghened
Note that in this proof, and in this proof only, we use the total number of blocks instead of the number of blocks of $\tL$s because this works well with the induction statement. 
The statement of the proposition is not changed by this, since the numbers of blocks of $\tO$s and blocks of $\tL$s differ at most by one.

The statement holds for $t\in\{0,1\}$.
We therefore assume that~\eqref{eqn_hyp_strenghened} holds for all $t$ whose binary expansion has a length strictly less than $L$, where $L\geq 2$.
Our strategy is now to first prove the inequalities for $\widetilde\gamma_t(\vartheta)$ and $\widetilde\gamma_{t+1}(\vartheta)$, and after that the one for $\xi_t(\vartheta)$.
In order to make the interplay between the statements in the induction hypothesis explicit, we rewrite~\eqref{eqn_tilde_gamma_rec} as a matrix recurrence for $t \geq 1$:
\begin{equation*}
\begin{aligned}\left(\begin{matrix}\widetilde\gamma_{2t}(\vartheta)\\\widetilde\gamma_{2t+1}(\vartheta)\end{matrix}\right)&=
A_0\left(\begin{matrix}\widetilde\gamma_t(\vartheta)\\\widetilde\gamma_{t+1}(\vartheta)\end{matrix}\right)+\left(\begin{matrix}0\\\xi_t(\vartheta)\end{matrix}\right) 
&& \text{ with } &
A_0&=\left(\begin{matrix}1&0\\[2mm]\frac{\e(\vartheta)}2&\frac{\e(-\vartheta)}2\end{matrix}\right);
\\[2mm]
\left(\begin{matrix}\widetilde\gamma_{2t+1}(\vartheta)\\\widetilde\gamma_{2t+2}(\vartheta)\end{matrix}\right)&=
A_1\left(\begin{matrix}\widetilde\gamma_t(\vartheta)\\\widetilde\gamma_{t+1}(\vartheta)\end{matrix}\right)+\left(\begin{matrix}\xi_t(\vartheta)\\0\end{matrix}\right)
&& \text{ with } &
A_1&=\left(\begin{matrix}\frac{\e(\vartheta)}2&\frac{\e(-\vartheta)}2\\[2mm]0&1\end{matrix}\right).
\end{aligned}
\end{equation*}
The idea is now to use these relations to reduce the length of $t$.
For this purpose, we regard the run of $\tO$s or $\tL$s at the very right of the binary expansion of~$t$.

First, if we have a run of $\tO$s, we can write $t=2^kt'$, where $t'$ is odd.
Iterating the first matrix equation above, we accumulate powers of $A_0$:
\begin{equation*}
\begin{aligned}\left(\begin{matrix}\widetilde\gamma_{2^kt'}(\vartheta)\\\widetilde\gamma_{2^kt'+1}(\vartheta)\end{matrix}\right)&=
A_0^k\left(\begin{matrix}\widetilde\gamma_{t'}(\vartheta)\\\widetilde\gamma_{t'+1}(\vartheta)\end{matrix}\right)+
\sum_{0\leq j<k}
A_0^{k-1-j}
\left(\begin{matrix}0\\\xi_{2^\ell t'}(\vartheta)\end{matrix}\right)
\\&=
A_0^k\left(\begin{matrix}
\widetilde\gamma_{t'}(\vartheta)\\\widetilde\gamma_{t'+1}(\vartheta)
\end{matrix}\right)+
\left(\begin{matrix}0\\E_0(\vartheta)\end{matrix}\right),
\end{aligned}
\end{equation*}
where, due to $\e(\vartheta)^j = \e(j \vartheta)$, we have
\[E_0(\vartheta)=
\sum_{0\leq j<k}
\frac{\e(-(k-1-j)\vartheta)}{2^{k-1-j}}
\xi_{2^jt'}(\vartheta),
\]
which satisfies
\[\bigl\lvert E_0(\vartheta)\bigr\rvert
\leq 2\max_{0\leq j<k}
\bigl\lvert \xi_{2^j t'}(\vartheta)\bigr\rvert.
\]
Now, the binary length of $2^jt'$ is strictly less than the binary length of $t$, therefore we can use our hypothesis in order to conclude that $\lvert E_0(\vartheta)\rvert\leq 2C\vartheta^6$.
Moreover, the number $M'$ of blocks (of $\tO$s or $\tL$s) in $t'$ is the number $M$ of blocks in $t$ decreased by one (since $t'$ is odd).
By the hypothesis and the fact that $A_0$ has row-sum norm equal to $1$, we obtain
$\lvert\widetilde\gamma_t(\vartheta)\bigr\rvert\leq 2CM\vartheta^6$ and
$\lvert\widetilde\gamma_{t+1}(\vartheta)\bigr\rvert\leq 2CM\vartheta^6$ for $t=2^k t'$.

Second, appending a block of $\tL$s to an even integer $t'$, we obtain from the second matrix equation
\begin{equation*}
\begin{aligned}
\left(\begin{matrix}\widetilde\gamma_{2^kt'+2^k-1}(\vartheta)\\\widetilde\gamma_{2^k(t'+1)}(\vartheta)\end{matrix}\right)&=
A_1^k\left(\begin{matrix}\widetilde\gamma_{t'}(\vartheta)\\\widetilde\gamma_{t'+1}(\vartheta)\end{matrix}\right)+
\left(\begin{matrix}E_1(\vartheta)\\0\end{matrix}\right),
\end{aligned}
\end{equation*}
where
\[E_1(\vartheta)=
\sum_{0\leq j<k}
\frac{\e(-(k-1-j)\vartheta)}{2^{k-1-j}}
\xi_{2^jt'+2^j-1}(\vartheta)
\]
satisfies
\[\bigl\lvert E_1(\vartheta)\bigr\rvert
\leq 2\max_{0\leq j<k}
\bigl\lvert \xi_{2^j t'+2^j-1}(\vartheta)\bigr\rvert.
\]
As above, we have by our induction hypothesis $E_1(\vartheta)\leq 2C\vartheta^6$.
Then, since the integer $t'$ has one block less than $t$ and 
since $A_1$ has row-sum norm equal to $1$, we can use our induction hypothesis~\eqref{eqn_hyp_strenghened} and get
$\lvert\widetilde\gamma_t(\vartheta)\bigr\rvert\leq 2CM\vartheta^6$ and
$\lvert\widetilde\gamma_{t+1}(\vartheta)\bigr\rvert\leq 2CM\vartheta^6$ for $t=2^k t' + 2^k-1$.

It remains to consider the inequality for $\xi_t(\vartheta)$.
We start by dividing Equation~\eqref{eqn_xi_def} by $\gammap_t(\vartheta)$.
This gives
%{{{ eqn_xi_factoring start
\begin{equation}\label{eqn_xi_factoring}
\begin{aligned}
\hspace{6em}&\hspace{-6em}
\frac{\xi_t(\vartheta)}{\gammap_t(\vartheta)} =
\frac{\e(\vartheta)}2
+\frac{\e(-\vartheta)}2 \exp\biggl(\sum_{2\leq j\leq 5}\frac{\cum_j(t+1)-\cum_j(t)}{j!}(i\tau\vartheta)^j\biggr)
\\&
-\exp\biggl(\sum_{2\leq j\leq 5}\frac{\cum_j(2t+1)-\cum_j(t)}{j!}(i\tau\vartheta)^j\biggr)
\end{aligned}\end{equation}
%}}} eqn_xi_factoring end
As observed before, we have $\xi_t(\vartheta) = \mathcal{O}(\vartheta^6)$ and consequently, dividing by the power series $\gammap_t(\vartheta)=1+\mathcal O(\vartheta^2)$, we see that the series of the right hand side also belongs to $\mathcal{O}(\vartheta^6)$.
Next, 
we get by the triangle inequality and the induction hypothesis
\begin{align*}
\bigl\lvert \gammap_t(\vartheta)\bigr\rvert
&\leq
\bigl\lvert \gamma_t(\vartheta)\bigr\rvert
+
\bigl\lvert \widetilde\gamma_t(\vartheta)\bigr\rvert
\leq
1+2CM\vartheta^6
\end{align*}
and since $\vartheta\leq M^{-1/6}$,
we obtain
\[\bigl\lvert \gammap_t(\vartheta)\bigr\rvert=\mathcal O(1).\]

Now we turn our attention to the right hand side of~\eqref{eqn_xi_factoring}, where we will treat each summand separately.
The first term $\e(\vartheta)/2$ has $(i\,\tau)^k/(2\cdot k!)$ as coefficients;
since $\tau\vartheta\leq 1$,
the contribution of the coefficients for $k\geq 6$ is bounded by
\[\frac 12\sum_{k\geq 6}\frac{(\tau\vartheta)^k}{k!}
\leq \frac 12(\tau\vartheta)^6(e-163/60)<\frac 1{1234}(\tau\vartheta)^6.
\]

Next, we want to show that the contribution of the second term (i.e., the product of two exponentials) and the third term are each bounded by $C(\tau\vartheta)^6$.
By Lemma~\ref{lem_aj_diff_bounds}, an upper bound for the coefficients of the second term is given by the coefficients of
\[f(\vartheta)=\exp\left(2\bigl((\tau\vartheta)+\cdots+(\tau\vartheta)^5\bigr)\right).\]
Clearly, the term $\vartheta^k$ in the $j$-fold product $(\vartheta+\vartheta^2+\cdots+\vartheta^5)^j$ appears at most $5^j$ times, but only for $j\geq k/5$.
Therefore the coefficient $[\vartheta^k]f(\vartheta)$ is bounded by
\[  \tau^k\sum_{k/5\leq j\leq k}2^j \frac{5^j}{j!}\leq
    \tau^k\sum_{j\geq k/5}\frac{10^j}{j!}.   \]
Consequently, as we only need to consider coefficients of $\vartheta^k$ with $k \geq 6$, and since $\lvert \tau\vartheta\rvert\leq 1$, we get
\begin{align*}
\sum_{k\geq 6}\vartheta^k\bigl[\vartheta^k\bigr]f(\vartheta)
\leq (\tau\vartheta)^6\sum_{k\geq 6}\sum_{j\geq k/5}\frac{10^j}{j!}
\leq 5(\tau\vartheta)^6\sum_{j\geq 1}\frac{10^jj}{j!}
\leq C'\vartheta^6
\end{align*}
for some absolute constant $C'$.
The same holds for the third exponential in~\eqref{eqn_xi_factoring}, as $\lvert\cum_j(2t+1)-\cum_j(t)\rvert=\lvert\cum_j(2t+1)-\cum_j(2t)\rvert\leq 240$.
Collecting these results we get an absolute and effective constant $C$ such that $\lvert\xi_t(\vartheta)\rvert\leq C\vartheta^6$ 
as long as $\vartheta\leq M^{-1/6}$ and $\lvert\tau\vartheta\rvert\leq 1$.
\end{proof}
%}}} Proof of prp_tildegamma_est
%}}} subsection
%{{{ subsection An integral representation of $c_t$

\pagebreak

\subsection{An integral representation of \texorpdfstring{$c_t$}{c\_t}}
\label{sec:intct}
We use the following representation of the values $c_t$.
%{{{ prp_ct_integral 
\begin{proposition}[{\cite[Proposition~2.1]{S2020}}]\label{prp_ct_integral}
Let $t\geq 0$.
We have
%{{{ eqn_ct_rep
\begin{equation}\label{eqn_ct_rep}
  c_t = \frac 12 + \frac{\delta(0,t)}2 +
  \frac 12\int_{-1/2}^{1/2}
  \imagpart \gamma_t(\vartheta)\cot(\pi \vartheta)\,\mathrm d\vartheta,
\end{equation}
%}}} eqn_ct_rep end
where the integrand is a bounded, continuous function.
\end{proposition}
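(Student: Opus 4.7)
The plan is to start from the decomposition $c_t=\sum_{j\geq 0}\delta(j,t)$ and combine it with the mass identity $\sum_{j\in\mathbb Z}\delta(j,t)=1$ (see~\eqref{eqn_delta_summable}) to obtain
\[
c_t \;=\; \frac12 \;+\; \frac{\delta(0,t)}{2} \;+\; \frac12\sum_{j\geq 1}\bigl(\delta(j,t)-\delta(-j,t)\bigr).
\]
Thus it will suffice to identify the integral in~\eqref{eqn_ct_rep} with $\sum_{j\geq 1}(\delta(j,t)-\delta(-j,t))$. Since $\gamma_t$ is Hermitian, $\imagpart\gamma_t$ is odd and admits the absolutely and uniformly convergent sine series $\imagpart\gamma_t(\vartheta)=\sum_{j\in\mathbb Z}\delta(j,t)\sin(\tau j\vartheta)$.

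The key step is the Dirichlet-type truncation
\[
\cot(\pi\vartheta) \;=\; 2\sum_{k=1}^{N}\sin(\tau k\vartheta) \;+\; \frac{\cos((2N+1)\pi\vartheta)}{\sin(\pi\vartheta)},
\]
obtained from the classical closed form for $\sum_{k=1}^{N}\e(k\vartheta)$ by taking imaginary parts. Multiplying by $\imagpart\gamma_t(\vartheta)$, integrating over $[-1/2,1/2]$, and invoking the sine orthogonality $\int_{-1/2}^{1/2}\sin(\tau j\vartheta)\sin(\tau k\vartheta)\,\mathrm d\vartheta=\frac12\bigl([j{=}k]-[j{=}{-}k]\bigr)$ on the first summand yields exactly $\sum_{k=1}^{N}\bigl(\delta(k,t)-\delta(-k,t)\bigr)$, which tends to the desired infinite sum as $N\to\infty$ by absolute convergence.

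The hard part is showing that the remainder $\int_{-1/2}^{1/2}\imagpart\gamma_t(\vartheta)\cos((2N+1)\pi\vartheta)/\sin(\pi\vartheta)\,\mathrm d\vartheta$ vanishes as $N\to\infty$. I plan to deduce this from the Riemann--Lebesgue lemma applied to $\imagpart\gamma_t(\vartheta)/\sin(\pi\vartheta)$, once integrability of that quotient is established. For this, I use that $\gamma_t(\vartheta)=\omega_t(\vartheta)\gamma_1(\vartheta)$ with $\omega_t$ a trigonometric polynomial and that the denominator $2-\e(-\vartheta)$ of $\gamma_1$ is bounded away from $0$ on the real line; consequently $\gamma_t$ is real-analytic on $[-1/2,1/2]$, and oddness of $\imagpart\gamma_t$ forces $\imagpart\gamma_t(\vartheta)=\LandauO(\vartheta)$ near $\vartheta=0$, cancelling the simple pole of $1/\sin(\pi\vartheta)$ and making the quotient bounded. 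The same real-analyticity, together with $\cot(\pi\cdot(\pm 1/2))=0$, shows that $\imagpart\gamma_t(\vartheta)\cot(\pi\vartheta)$ extends to a bounded continuous function on $[-1/2,1/2]$, yielding the last claim of the proposition.
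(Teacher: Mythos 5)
Your argument is correct and complete: the symmetrization of $c_t=\sum_{j\ge0}\delta(j,t)$ via $\sum_{j\in\mathbb Z}\delta(j,t)=1$, the conjugate-Dirichlet-kernel identity for $\cot(\pi\vartheta)$, the orthogonality computation, and the Riemann--Lebesgue treatment of the remainder (justified by the factorization $\gamma_t=\omega_t\gamma_1$ with $\lvert 2-\e(-\vartheta)\rvert\ge1$ and the oddness of $\imagpart\gamma_t$) all check out, and they also establish the boundedness and continuity of the integrand as claimed. The paper itself gives no proof here --- it imports the statement from \cite[Proposition~2.1]{S2020} --- and your derivation follows essentially the same standard route as that source.
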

%}}} prp_ct_integral end
We split the integral at the points $\pm\vartheta_0$, where $\vartheta_0=M^{-1/2}R$.
Here $M$ is the number of blocks of $\tL$s in $t$ and $R$ is a small parameter to be chosen in a moment.
For now, we assume that
\begin{equation}\label{eqn_technical}
\begin{aligned}
8&\leq R\leq M^{1/3}\quad\mbox{and}\quad
\vartheta_0\le1/\tau
\end{aligned}
\end{equation}
for technical reasons as, among others, we need to apply Proposition~\ref{prp_tildegamma_est}.
Note that under these hypotheses,
\[\vartheta_0\leq M^{-1/6},\]
so that the proposition will be applicable.
We will choose $R=\log M$; then~\eqref{eqn_technical} will be satisfied for large $M$.
The tails of the above integral will be estimated using the following lemma.
\begin{lemma}[{\cite[Lemma~2.7]{S2020}}]\label{lem_gamma_tail}
Assume that $t\geq 1$ has at least $M=2M'+1$ blocks of $\tL$s.
Then
\begin{equation*}
\left\lvert\gamma_t(\vartheta)\right\rvert\leq
\left(1-\frac{\vartheta^2}2\right)^{M'}
\leq
\exp\left(-\frac{M'\vartheta^2}2\right)
\leq 2\exp\left(-\frac{M\vartheta^2}4\right)
\end{equation*}
for $\lvert\vartheta\rvert\leq1/2$.
\end{lemma}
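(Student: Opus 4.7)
The plan is to prove the first inequality, $\lvert\gamma_t(\vartheta)\rvert \leq (1-\vartheta^2/2)^{M'}$, by induction on the number of blocks of $\tL$s in $t$; the remaining two inequalities are immediate from $1 - x \leq e^{-x}$ applied with $x = \vartheta^2/2$, together with $M = 2M' + 1$ and $\lvert\vartheta\rvert \leq 1/2$ (the latter giving $\exp(\vartheta^2/4) \leq \exp(1/16) < 2$). The base case $M' = 0$ reduces to the trivial bound $\lvert\gamma_t(\vartheta)\rvert \leq 1$ for characteristic functions.

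I would reformulate \eqref{eqn_gamma_recurrence} as a matrix iteration on $v_t := (\gamma_t(\vartheta), \gamma_{t+1}(\vartheta))^T$, namely $v_{2t} = A_0 v_t$ and $v_{2t+1} = A_1 v_t$, with $A_0, A_1$ as defined in Section~\ref{sec:approx}. Writing $t = 2^L \hat{t} + r$ with $0 \leq r < 2^L$, this yields $v_t = A_{r_0} A_{r_1} \cdots A_{r_{L-1}} v_{\hat{t}}$, where $r_0, \ldots, r_{L-1}$ are the binary digits of $r$, least significant first. A block of $\tL$s of length $a$ and a block of $\tO$s of length $b$ in the peeled-off segment contribute factors $A_1^a$ and $A_0^b$ to this product.

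The technical heart is a \emph{per-pair contraction}: peeling off from $t$ two consecutive blocks of $\tL$s of lengths $a, c \geq 1$ separated by a block of $\tO$s of length $b \geq 1$ should decrease a suitable norm on $\mathbb{C}^2$ by a factor of at most $(1 - \vartheta^2/2)$; equivalently, $A_1^c A_0^b A_1^a$ is such a contraction. Since $A_0, A_1$ are triangular with diagonal entries $(1, \e(-\vartheta)/2)$ and $(\e(\vartheta)/2, 1)$ respectively, their powers admit closed forms as geometric sums in $\e(\pm\vartheta)/2$, so the triple product can be computed directly. A natural simplification is the change of basis $\alpha := (\gamma_t + \gamma_{t+1})/2$, $\beta := (\gamma_t - \gamma_{t+1})/2$; a direct calculation shows that applying $A_0$ or $A_1$ passes the $\alpha$-component through a factor $\cos^2(\pi\vartheta)$, which satisfies $\cos^2(\pi\vartheta) \leq 1 - \vartheta^2/2$ for $\lvert\vartheta\rvert \leq 1/2$ by Jordan's inequality $\sin(\pi\vartheta) \geq 2\vartheta$ --- this is the natural source of the contraction. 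With the per-pair lemma in hand, the statement follows by iterating $M'$ times, partitioning the $M = 2M' + 1$ blocks of $\tL$s of $t$ into $M'$ disjoint consecutive pairs with one leftover block.

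The main obstacle is identifying the precise norm (or quadratic form) on $\mathbb{C}^2$ that is invariant under individual applications of $A_0, A_1$ but strictly contracted by $A_1^c A_0^b A_1^a$. Neither matrix is unitary or nonnegative, and the naive row-sum bound yields $\|A_0\|_\infty = \|A_1\|_\infty = 1$ with no per-step contraction --- the decay must be extracted from the interaction between blocks of different types, emerging as the $\cos^2(\pi\vartheta)$ factor picked up whenever the $\alpha$-component passes through a $\tO$-block followed by an $\tL$-block. Care is needed to control the coupling between $\alpha$ and $\beta$, ensuring the $\beta$-component is not amplified by more than the $\alpha$-component is contracted.
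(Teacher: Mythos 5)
The paper does not prove this lemma at all --- it is imported verbatim as \cite[Lemma~2.7]{S2020} --- so there is no internal proof to compare against; the question is whether your blind attempt stands on its own. The easy parts of your proposal are fine: the reduction of the second and third inequalities to $1-x\leq\e^{-x}$ and $\exp(\vartheta^2/4)\leq\exp(1/16)<2$, the base case $M'=0$, the matrix recurrence $v_{2t}=A_0v_t$, $v_{2t+1}=A_1v_t$, and the idea of pairing the $M=2M'+1$ blocks of $\tL$s into $M'$ disjoint pairs so that each pair contributes one factor $1-\vartheta^2/2$. This is indeed the right shape of argument.

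The genuine gap is that the per-pair contraction --- the entire quantitative content of the lemma --- is never established; you explicitly label it ``the main obstacle'' and leave it open. Worse, the mechanism you point to cannot, on its own, be the source of the decay. It is true that in the basis $\alpha=(\gamma_t+\gamma_{t+1})/2$, $\beta=(\gamma_t-\gamma_{t+1})/2$ the diagonal $\alpha\to\alpha$ entry of $A_0$ and $A_1$ is $\cos^2(\pi\vartheta)$, but the $\beta\to\alpha$ coupling entry has modulus $\tfrac12\sqrt{1+\sin^2(\tau\vartheta)}\geq\tfrac12$; if the $\cos^2(\pi\vartheta)$ factor alone produced the contraction, one would get decay in the total number of binary digits of $t$, which is false, since $\gamma_{2^k}=\gamma_1$ for all $k$. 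So the decay must be extracted from the interplay of the coupling terms across an entire $\tL$--$\tO$--$\tL$ pattern, and this is delicate: both $A_0$ and $A_1$ have row-sum norm exactly $1$, the product $A_1^cA_0^bA_1^a$ still has one row of absolute row sum exactly $1$ when $a=1$ (its second row equals that of $A_0^bA_1^a$, and the first row of $A_1$ has row sum exactly $1$), and the naive submultiplicative estimates one writes down first give a gain of order $2^{-\min(a,b,c)}\vartheta^2$, which degrades with the block lengths instead of being the uniform $\vartheta^2/2$ the lemma requires. A correct argument must locate the gain at a fixed ``corner'' of the product (e.g.\ via $\bigl|\tfrac{\e(\vartheta)}2+\tfrac14\bigr|\leq\tfrac34-\tfrac43\vartheta^2$, which follows from $\sin(\pi\vartheta)\geq2\vartheta$), show that this gain is not diluted by the remaining powers of $A_0$ and $A_1$ in the group, and verify that it reaches \emph{both} components before the next group is applied. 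None of this bookkeeping is carried out, so the proposal is a plausible strategy rather than a proof.
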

We have $\cot(x)=1/x+\mathcal O(1)$ for $x\leq 1/2$.
The contribution of the tail can therefore be bounded by
\[\int_{M^{-1/2}R}^{1/2}\exp\left(-\frac{M\vartheta^2}4\right)\cot(\pi\vartheta)\,\mathrm d\vartheta
\leq 
\frac 1\pi
I+
\mathcal O\left(J\right),
\]
where 
\[I=\int_{M^{-1/2}R}^{\infty}\exp\left(-\frac{M\vartheta^2}4\right)\frac{\mathrm d\vartheta}{\vartheta}\]
and
\[J=\int_{M^{-1/2}R}^{\infty}\exp\left(-\frac{M\vartheta^2}4\right)\,\mathrm d\vartheta.
\]

The integral $J$ is bounded by
\[\mathcal O\left(\exp\bigl(-M(M^{-1/2}R)^2/4\bigr)\right)=\mathcal O\left(\exp\bigl(-R^2/4\bigr)\right).\]

In order to estimate $I$, we write
\[I\leq \sum_{j\geq 0}\int_{2^j\vartheta_0}^{2^{j+1}\vartheta_0}\exp\left(-\frac{M\vartheta^2}{4}\right)\frac{\mathrm d\vartheta}{2^j\vartheta_0}
\leq
\sum_{j\geq 0}\exp\left(-\frac{4^jR^2}{4}\right).
\]
Using the hypothesis $R\geq 1$, this is easily shown to be bounded by $\mathcal O\left(\exp\bigl(-R^2/4\bigr)\right)$ by a geometric series.
For $\lvert \vartheta\rvert\leq \vartheta_0$, we replace $\gamma_t(\vartheta)$ by $\gammap_t(\vartheta)$ in the integral in~\eqref{eqn_ct_rep}, using Proposition~\ref{prp_tildegamma_est}.
Noting the hypotheses~\eqref{eqn_technical}, we obtain
$\lvert \gamma_t(\vartheta)-\gammap_t(\vartheta)\rvert\ll M \lvert\vartheta\rvert^6$,
where $M$ is the number of blocks in $t$.
Therefore
\begin{align}
\notag
\hspace{3em}&\hspace{-3em}
\int_{-1/2}^{1/2}\imagpart \gamma_t(\vartheta)\cot(\pi\vartheta)\,\mathrm d\vartheta
=
\int_{-\vartheta_0}^{\vartheta_0}\imagpart \gamma_t(\vartheta)\cot(\pi\vartheta)\,\mathrm d\vartheta+\mathcal O\bigl(\exp\bigl(-R^2/4\bigr)\bigr)
\\
\label{eqn_imagpart}
&=
\int_{-\vartheta_0}^{\vartheta_0}\imagpart \gammap_t(\vartheta)\cot(\pi\vartheta)\,\mathrm d\vartheta+
\mathcal O\left(
M\int_{0}^{\vartheta_0}\vartheta^{5}\,\mathrm d\vartheta
\right)
+
\mathcal O\bigl(\exp\bigl(-R^2/4\bigr)\bigr)
\\
\notag
&=
\int_{-\vartheta_0}^{\vartheta_0}\imagpart \gammap_t(\vartheta)\cot(\pi\vartheta)\,\mathrm d\vartheta+
\mathcal O(E),
\end{align}
where, due to $\vartheta_0 = M^{-1/2} R$, we have
\[E=M^{-2}R^6+\exp\bigl(-R^2/4\bigr).\]
Similarly, combining~\eqref{eqn_gamma_integral} with the above reasoning, we get 
\begin{equation}\label{eqn_delta_0t}
\delta(0,t)=\int_{-\vartheta_0}^{\vartheta_0}
\realpart \gammap_t(\vartheta)\,\mathrm d\vartheta+\mathcal O(E).
\end{equation}
Next we return to the definition of $\gammap_t(\vartheta)$ from~\eqref{eqn_gammaprime_def}.
By the Taylor expansion of $\exp$,
using Corollary \ref{cor_aj_bounds}, we have for $\lvert\vartheta\rvert\leq \vartheta_0$
\begin{align*}
\gammap_t(\vartheta)
&=
\exp\left(-\cum_2(t)\frac{(\tau\vartheta)^2}2\right)
\times
\Bigl(1
+\frac{\cum_3(t)}{6}(i\tau\vartheta)^3
+\frac{\cum_4(t)}{24}(i\tau\vartheta)^4
+\frac{\cum_5(t)}{120}(i\tau\vartheta)^5
\Bigr.\\&\Bigl.
+\frac1{72}\cum_3(t)^2(i\tau\vartheta)^6
+\frac1{144}\cum_3(t)\cum_4(t)(i\tau\vartheta)^7
+\frac1{1296}\cum_3(t)^3(i\tau\vartheta)^9
\Bigr)
\\&
+\mathcal O\bigl(M^2\vartheta^8+M^3\vartheta^{10}\bigr)
+i\mathcal O\bigl(M^2\vartheta^9+M^3\vartheta^{11}\bigr),
\end{align*}
where both error terms are real.
We note that $\cot(\pi \vartheta)=2/(\tau \vartheta)-\tau\vartheta/6+\mathcal O(\vartheta^3)$ for $\lvert\vartheta\rvert\leq 1/2$.
Splitting into real and imaginary summands, of which there are three and four, respectively, we obtain by~\eqref{eqn_imagpart} and~\eqref{eqn_delta_0t}
\begin{align*}c_t&=\frac 12+\frac 12\int_{-\vartheta_0}^{\vartheta_0}
\exp\left(-\cum_2(t)\frac{(\tau\vartheta)^2}2\right)
\biggl(
1+\frac{\cum_4(t)}{24}(\tau\vartheta)^4
-\frac1{72}\cum_3(t)^2(\tau\vartheta)^6
\Bigr.\\&\Bigl.
\quad+\biggl(
-\frac16\cum_3(t)(\tau\vartheta)^3
+\frac1{120}\cum_5(t)(\tau\vartheta)^5
-\frac1{144}
\cum_3(t)\cum_4(t)(\tau\vartheta)^7
\\&\quad
+
\frac1{1296}
\cum_3(t)^3
(\tau\vartheta)^9\biggr)\cot(\pi\vartheta)
\biggr)
\,\mathrm d\vartheta
+\mathcal O\bigl(E+E_2\bigr)
\\&=\frac 12+
\frac 12\int_{-\vartheta_0}^{\vartheta_0}
\exp\left(-\cum_2(t)\frac{(\tau\vartheta)^2}2\right)
\biggl(
1+\frac{\cum_4(t)}{24}(\tau\vartheta)^4
-\frac{\cum_3(t)^2}{72}(\tau\vartheta)^6
-\frac{\cum_3(t)}{3}(\tau\vartheta)^2
\\&\quad
+\frac{\cum_5(t)}{60}(\tau\vartheta)^4
-\frac{\cum_3(t)\cum_4(t)}{72}(\tau\vartheta)^6
+
\frac{\cum_3(t)^3}{648}(\tau\vartheta)^8
+\frac{\cum_3(t)}{36}(\tau\vartheta)^4\biggr)
\,\mathrm d\vartheta
+\mathcal O\bigl(E+E_2\bigr),
\end{align*}
where
\[E_2=\int_{-\vartheta_0}^{\vartheta_0}
\left(M\vartheta^6+M^2\vartheta^8+M^3\vartheta^{10}\right)\,\mathrm d\vartheta
\ll M^{-5/2}R^{11}.
\]
We extend the integration limits again, introducing an error
\[E_3\ll\int_{M^{-1/2}R}^\infty \exp\left(-\cum_2(t)\frac{\vartheta^2}2\right)\bigl(1+M\vartheta^2+M\vartheta^4+M^2\vartheta^6+M^3\vartheta^8\bigr).\]
In order to estimate this, we use the following lemma.
%{{{ lem_gaussian_integrals
\begin{lemma}\label{lem_gaussian_integrals}
For real numbers $a>0$ and $\delta\geq 0$, and integers $j\geq 0$, we define
\[I_j=\int_\delta^\infty x^j \exp(-ax^2).\]
Then 
\begin{align*}
I_2&\ll \frac\delta a\exp\bigl(-a\delta^2\bigr),\\
I_4&\ll \left(\frac{\delta^3}a+\frac {\delta}{a^2}\right)\exp\bigl(-a\delta^2\bigr),\\
I_6&\ll \left(\frac{\delta^5}a+\frac{\delta^3}{a^2}+\frac{\delta}{a^3}\right)\exp\bigl(-a\delta^2\bigr),\\
I_8&\ll \left(\frac{\delta^7}a+\frac{\delta^5}{a^2}+\frac{\delta^3}{a^3}+\frac{\delta}{a^4}\right)\exp\bigl(-a\delta^2\bigr).
\end{align*}
\end{lemma}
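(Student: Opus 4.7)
The plan is to derive all four estimates from a single integration-by-parts reduction together with a trivial base case. First, for each $j \geq 2$, integration by parts in $I_j = \int_\delta^\infty x^{j-1}\cdot x\exp(-ax^2)\,\mathrm dx$, taking $u = x^{j-1}$ and $\mathrm dv = x\exp(-ax^2)\,\mathrm dx$ (so that $v = -\exp(-ax^2)/(2a)$), yields the recurrence
\[
I_j = \frac{\delta^{j-1}}{2a}\exp(-a\delta^2) + \frac{j-1}{2a}\, I_{j-2}.
\]
For the base case I would use the elementary inequality $1 \leq x/\delta$ valid on the domain of integration to obtain
\[
I_0 \leq \frac{1}{\delta}\int_\delta^\infty x\exp(-ax^2)\,\mathrm dx = \frac{\exp(-a\delta^2)}{2a\delta}.
\]

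Iterating the recurrence $k$ times, with the tail reduced via the base-case bound, produces an estimate of the shape
\[
I_{2k} \ll \sum_{j=1}^{k}\frac{\delta^{2k-2j+1}}{a^j}\exp(-a\delta^2) + \frac{\exp(-a\delta^2)}{a^{k+1}\delta},
\]
where the numerical constants come from the double factorial $(2k-1)!!$. For $k=1,2,3,4$ the main sum is exactly the family of monomials listed in the statement. The only discrepancy is the residual summand $1/(a^{k+1}\delta)\exp(-a\delta^2)$, which can be rewritten as $\delta/\bigl(a^k\cdot a\delta^2\bigr)\cdot\exp(-a\delta^2)$; in the regime in which the lemma will be applied in Section~\ref{sec:intct} (with $a \asymp \cum_2(t) \geq M$ and $\delta = M^{-1/2}R$, hence $a\delta^2 \gtrsim R^2 \geq 1$), this residual is dominated by the last term $\delta/a^k$ already present in the sum and is absorbed into the $\ll$.

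I do not foresee any genuine obstacle here: the recurrence is textbook and the base-case trick is standard. The only things requiring a little care are the bookkeeping of the numerical coefficients generated by the iterated factor $(j-1)/(2a)$, and the observation that the implicit lower bound $a\delta^2 \gtrsim 1$ in the intended application is what makes the $I_0$-contribution harmless. Substituting $k=1,2,3,4$ into the displayed bound then produces the four claimed inequalities in turn.
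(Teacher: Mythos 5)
Your argument is essentially the paper's own proof: the identical integration-by-parts recurrence $I_j=\frac{\delta^{j-1}}{2a}\exp(-a\delta^2)+\frac{j-1}{2a}I_{j-2}$, iterated down to a one-line bound on $I_0$, with only the base case differing (your $I_0\leq\exp(-a\delta^2)/(2a\delta)$ versus the paper's cruder $I_0\ll\exp(-a\delta^2)$). Your explicit observation that the residual $I_0$-contribution is only absorbed into the stated monomials when $a\delta^2\gtrsim 1$ is a point the paper silently glosses over (as literally stated, the bounds fail as $\delta\to0$ with $a$ fixed), and you correctly note that this hypothesis holds in the regime where the lemma is applied.
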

%}}} lem_gaussian_integrals
%{{{ Proof of lem_gaussian_integrals
\begin{proof}
We have
\[\frac{\partial}{\partial x}
x^m \exp\bigl(-ax^2\bigr)
=
\bigl(mx^{m-1}-2ax^{m+1}\bigr)
\exp\bigl(-ax^2\bigr),
\]
therefore
\[I_{m+1}
=-\frac {x^m}{2a}\exp(-ax^2)\Big\vert_{\delta}^\infty
+\frac{m}{2a}I_{m-1}.
\]
Noting that $I_0\ll \exp\bigl(-a\delta^2\bigr)$,
we obtain the above estimates by recurrence.
\end{proof}
%}}} Proof of lem_gaussian_integrals end
We insert $a=\cum_2(t)/2$ and $\delta=\vartheta_0$.
By Lemma~\ref{lem_a2_lower_bound}
we have $a\geq M/2>0$,
and by our hypothesis~\eqref{eqn_technical} we have $R\leq M^{1/6}$,
which implies in particular that $\delta=M^{-1/2}R\leq1$.
By these estimates and Lemma~\ref{lem_gaussian_integrals}, 
we obtain
\begin{align*}
E_3&\ll
\left(1+M^{-1/2}R + M^{-3/2}R^7 \right)
\exp\left(-\cum_2(t)(M^{-1/2}R)^2/2\right)
\\&
\ll
\exp\left(-R^2/2\right)\ll E.
\end{align*}

Substituting $\tau\vartheta$ by $\vartheta$, we obtain
\begin{align*}
c_t&=\frac 12+\frac 1{2\tau}\int_{-\infty}^{\infty}
\exp\biggl(-\cum_2(t)\frac{\vartheta^2}2\biggr)
\biggl(1-\frac{\cum_3(t)}3\vartheta^2
+\biggl(\frac{\cum_3(t)}{36}
+\frac{\cum_4(t)}{24}
+\frac{\cum_5(t)}{60}\biggr)\vartheta^4
\\&
+\biggl(-\frac{\cum_3(t)}{72}-\frac{\cum_4(t)}{72}\biggr)\cum_3(t)\vartheta^6
+\frac{\cum_3(t)^3}{648}\vartheta^8
\biggr)
\,\mathrm d\vartheta+\mathcal O\bigl(E+E_2\bigr).
\end{align*}
Inserting standard Gaussian integrals,
it follows that
%{{{ eqn_ct_Aj_reduction
\begin{equation}\label{eqn_ct_Aj_reduction}
\begin{aligned}
c_t&=
\frac 12+\frac{\sqrt{2}}{4\sqrt{\pi}}
\biggl(\cum_2(t)^{-1/2}
-\frac{\cum_2(t)^{-3/2}\cum_3(t)}3
\\&
+3\cum_2(t)^{-5/2}
\biggl(
\frac{\cum_3(t)}{36}
+\frac{\cum_4(t)}{24}
+\frac{\cum_5(t)}{60}
\biggr)
\\&
+15\cum_2(t)^{-7/2}
\biggl(
-\frac{\cum_3(t)}{72}-\frac{\cum_4(t)}{72}
\biggr)\cum_3(t)
+105\cum_2(t)^{-9/2}
\frac{\cum_3(t)^3}{648}
\biggr)
\\&+\mathcal O\left(M^{-2}R^{11}+\exp\bigl(-R^2/4\bigr)\right)
\end{aligned}
\end{equation}
%}}} eqn_ct_Aj_reduction
under the hypotheses that $8\leq R\leq M^{1/6}$ and 
$M^{-1/2}R\leq 1/\tau$,
where $M$ is the number of blocks of $\tL$s in $t$.
The multiplicative constant in the error term is absolute, as customary in this paper.
In order to simplify the error term, we choose
\begin{equation}\label{eqn_R_choice}
R=\log M.
\end{equation}
Using the hypothesis $R\geq 8$,
we have $\exp\bigl(-R^2/4\bigr)\leq M^{-2}$.
Then, since $\cum_2(t)\geq 0$ for all $t$, we see that for $c_t>1/2$ it is sufficient to prove
%{{{ eqn_sufficient
\begin{equation*}%\label{eqn_sufficient}
v(t)\geq 0,
\end{equation*}
%}}} eqn_sufficient end
where
%{{{ eqn_v_def
\begin{equation}\label{eqn_v_def}
\begin{aligned}
v(t)&=
\cum_2(t)^4
-
\cum_2(t)^3
\frac{\cum_3(t)}3
+\cum_2(t)^2
\biggl(
\frac{\cum_3(t)}{12}
+\frac{\cum_4(t)}{8}
+\frac{\cum_5(t)}{20}
\biggr)
\\&
+5\cum_2(t)
\biggl(
-\frac{\cum_3(t)}{24}-\frac{\cum_4(t)}{24}
\biggr)\cum_3(t)
+35\frac{\cum_3(t)^3}{216}
-C\cum_2(t)^{5/2}R^{11}.
\end{aligned}
\end{equation}
%}}} eqn_v_def end
and $C$ is large enough such that the error term in~\eqref{eqn_ct_Aj_reduction} is strictly dominated by $C\cum_2(t)^{5/2}M^{-2}R^{11}$.
Usually the first term is the dominant one;
the critical cases occur when the first two terms in~\eqref{eqn_v_def} almost cancel.
We couple these terms and write
\[D=D(t)=\cum_2(t)-\frac{\cum_3(t)}3.\]
Let us rewrite the expression for $v(t)$, eliminating $\cum_3(t)$.
Clearly, we have $\cum_3(t)^2=9\cum_2(t)^2-18D\cum_2(t)+9D^2$
and
$\cum_3(t)^3=27\cum_3(t)^3-81D\cum_3(t)^2+81D^2\cum_3(t)-27D^3$.
Omitting the argument $t$ of the functions $\cum_j$ for brevity, we obtain
\begin{equation}\label{eqn_vt_long}
\begin{aligned}
v(t)&=D\cum_2^3
+\frac 14\cum_2^3-\frac14D\cum_2^2
+\frac18\cum_2^2\cum_4
+\frac1{20}\cum_2^2\cum_5
\\&\quad
-\frac{15}8 \cum_2^3
+\frac{15}4D\cum_2^2
-\frac{15}8D^2\cum_2
-\frac 58\cum_2^2\cum_4
+\frac58D\cum_2\cum_4
\\&\quad
+\frac{35}{8}
\biggl(
\cum_2^3-3D\cum_2^2+3D^2\cum_2-D^3
\biggr)
-C\cum_2^{5/2}R^{11}
\\&=
\left(D+\frac{11}{4}\right)\cum_2^3
-\frac12\cum_2^2\cum_4
+\frac1{20}\cum_2^2\cum_5
-\frac{77}8D\cum_2^2
\\&\quad
+\frac58D\cum_2\cum_4
+\frac{45}4D^2\cum_2
-\frac{35}8D^3
-C\cum_2^{5/2}R^{11}.
\end{aligned}
\end{equation}

We distinguish between small and large values of $D$.
Note that $|\cum_j|\leq CM$, $D\leq CM$ for some absolute constant $C$ (expressed in Corollary~\ref{cor_aj_bounds}),
moreover $\cum_2\geq M$ (Proposition~\ref{lem_a2_lower_bound}) and $R=\log M$.
Thus, we have $|\cum_j| \leq C \cum_2$ and $D \leq C \cum_2$.
Therefore there exists an absolute constant $D_0$ (which could be made explicit easily) such that 
\begin{equation}\label{eqn_vt_easy_bound}
v(t)\geq \bigl(D(t)-D_0\bigr)\cum_2(t)^3
\end{equation}
for all $t\geq 1$.
Clearly this implies $v(t)\geq 0$ for all $t$ such that $D(t)\geq D_0$.
We have therefore proved the following result.
%{{{ lem_ct_uncritical
\begin{lemma}\label{lem_ct_uncritical}
There exists a constant $D_0$ such that, if 
$\cum_2(t)-\cum_3(t)/3\geq D_0$, then $c_t>1/2$.
\end{lemma}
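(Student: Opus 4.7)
The lemma is an immediate consequence of the bound~\eqref{eqn_vt_easy_bound} that has just been announced: if $v(t) \geq (D(t) - D_0)\cum_2(t)^3$ and $D(t) \geq D_0$, then $v(t) \geq 0$, and by the derivation in~\eqref{eqn_ct_Aj_reduction}--\eqref{eqn_v_def} this yields $c_t > 1/2$. The entire task therefore reduces to verifying~\eqref{eqn_vt_easy_bound}.

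To do so, I would work directly from the explicit expansion~\eqref{eqn_vt_long}. Write $v(t) = D(t)\,\cum_2(t)^3 + \rho(t)$, where $\rho(t)$ collects all remaining summands together with the error contribution $-C\cum_2(t)^{5/2}(\log M)^{11}$. The goal is to show that $|\rho(t)| \leq D_0 \cum_2(t)^3$ for some absolute $D_0$. The ingredients are already at hand: by Corollary~\ref{cor_aj_bounds}, $|\cum_j(t)| \leq CM$ for $j \in \{2,3,4,5\}$; by Lemma~\ref{lem_a2_lower_bound}, $\cum_2(t) \geq M$. Combining these gives $|\cum_j(t)| \leq C\cum_2(t)$ for $j \leq 5$, and consequently $|D(t)| = |\cum_2(t) - \cum_3(t)/3| \leq C' \cum_2(t)$. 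Substituting these estimates into each monomial of~\eqref{eqn_vt_long} --- for instance $\cum_2^2 \cum_4$, $\cum_2^2 \cum_5$, $D\cum_2\cum_4$, $D^2\cum_2$, and $D^3$ --- each is bounded in absolute value by a fixed constant multiple of $\cum_2(t)^3$.

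The only term requiring a moment of thought is the error contribution $C\cum_2(t)^{5/2}(\log M)^{11}$. Since $M \leq \cum_2(t)$, this is at most $C\cum_2(t)^{5/2}(\log \cum_2(t))^{11}$, which is $o(\cum_2(t)^3)$ as $\cum_2(t) \to \infty$. Thus for $\cum_2(t)$ larger than some absolute threshold it is dominated by $\cum_2(t)^3$; for smaller $\cum_2(t)$ (equivalently for small $M$) one simply enlarges $D_0$ so that the hypothesis $D(t) \geq D_0$ either fails outright or forces us back into the large-$M$ regime (recall that, in view of Corollary~\ref{cor_aj_bounds}, $D(t) \leq C\cum_2(t) \leq C''M$). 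Bundling all the constants produced above into a single $D_0$ delivers~\eqref{eqn_vt_easy_bound}.

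The main, and really quite mild, obstacle is purely bookkeeping: enumerating the monomials of~\eqref{eqn_vt_long}, applying the two-sided bounds $M \leq \cum_2(t)$ and $|\cum_j(t)|, |D(t)| \leq C\cum_2(t)$ term by term, and verifying that the $\cum_2^{5/2}(\log M)^{11}$ contribution is genuinely sub-cubic. No deeper step is required, since the substantial analytic work was already carried out in Sections~\ref{sec:charcumgf}, \ref{sec:approx}, and~\ref{sec:intct}; the present lemma is essentially a quantitative read-off of the polynomial identity~\eqref{eqn_vt_long}.
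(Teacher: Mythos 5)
Your proposal is correct and follows the paper's own route exactly: the lemma is stated there as a summary of the preceding derivation, namely the reduction of $c_t>1/2$ to $v(t)\geq 0$ via \eqref{eqn_ct_Aj_reduction}--\eqref{eqn_v_def} and the term-by-term bounding of \eqref{eqn_vt_long} using $\lvert\cum_j(t)\rvert\leq C\cum_2(t)$ and $D(t)\leq C\cum_2(t)$ to obtain \eqref{eqn_vt_easy_bound}. Your explicit remark that small $M$ is handled by enlarging $D_0$ (since $D(t)\leq CM$ forces the hypothesis to fail there) is a detail the paper leaves implicit, but it is the same argument.
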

%}}} lem_ct_uncritical
The remainder of the proof of Theorem~\ref{thm_main} is concerned with the case $D(t)<D_0$.
As $D_0$ is an absolute constant, independent of $t$ and $M$, we see that $D(t)/\cum_2(t)^{\lambda}$ with $\lambda>0$ becomes arbitrarily small when the number of blocks in $t$ increase. 
Thus, we obtain from~\eqref{eqn_vt_long} the following statement:
for all $\varepsilon>0$ there is an $M_0$ such that for $M\geq M_0$ we have
%{{{ eqn_11_4
\begin{align}\label{eqn_11_4}
v(t)&\geq
\left(D+\frac{11}4-\varepsilon\right)\cum_2^3
-\frac12\cum_2^2\cum_4
+\frac1{20}\cum_2^2\cum_5.
\end{align}
%}}} eqn_11_4 end
We proceed by taking a closer look at the values $D(t)$.
We have
\begin{align*}
D(2t+1)&=\frac{\cum_2(t)+\cum_2(t+1)}2-\frac{\cum_3(t)+\cum_3(t+1)}6-\frac{\cum_2(t)-\cum_2(t+1)}2+1,
\end{align*}
therefore
%{{{ eqn_D_rec
\begin{equation}\label{eqn_D_rec}
D(2t)=D(t)\quad\textrm{and}\quad D(2t+1)=
\frac{D(t)+D(t+1)}2
+
\frac{\cum_2(t+1)-\cum_2(t)}2
+1.
\end{equation}
%}}} eqn_D_rec end
By~\eqref{eqn_fubini}, we have $D(1)=D(2)=4$,
moreover the term $(\cum_2(t+1)-\cum_2(t))/2+1$ is nonnegative by Lemma~\ref{lem_aj_diff_bounds}.
This implies 
%{{{ eqn_D_2
\begin{equation}\label{eqn_D_2}
D(t)\geq 4.
\end{equation}
%}}} eqn_D_2
Choosing $\varepsilon=1/8$ in~\eqref{eqn_11_4},
we see that it remains to show that
%{{{ eqn_lincomb_sufficient
\begin{align}\label{eqn_lincomb_sufficient}
53\cum_2
-4\cum_4
+\frac25\cum_5>0
\end{align}
%}}} eqn_lincomb_sufficient
if $t$ contains many blocks, and $D(t)$ is bounded by some absolute constant $D_0$.

This is done in two steps:
first, we determine the structure of the exceptional set of integers~$t$ such that $D(t)$ is bounded.
We will see that such an integer has few blocks of $\tO$s of length $\geq 2$, and few blocks of $\tL$s of bounded length.
As a second step, we prove lower bounds for the numbers $-\cum_4(t)$ and $\cum_5(t)$, if $t$ is contained in this exceptional set.

%}}}
%{{{ subsection Determining the exceptional set
\subsection{Determining the exceptional set}
\label{sec:determinintexceptionalset}
We define the exceptional set 
\begin{equation*}
	\{t : D(t)<D_0 \}, 
\end{equation*}
where $D_0$ is the constant from Lemma~\ref{lem_ct_uncritical}.
In this section we will derive some structural properties of its elements.

We begin with investigating the effect of appending a block of the form $\tO\tL^k$.
%{{{ lem_OLL
\begin{lemma}\label{lem_OLL}
For $t\geq 0$ and $k\geq 0$ we have
%{{{ eqn_OLL_explicit
\begin{equation}\label{eqn_OLL_explicit}
\cum_2(2^{k+1}t+2^k-1)=
\frac{(2^k+1)\cum_2(t)}{2^{k+1}}+\frac{(2^k-1)\cum_2(t+1)}{2^{k+1}}+\frac {3\left(2^k-1\right)}{2^k},
\end{equation}
%}}} eqn_OLL_explicit end
%{{{ eqn_DOLL
\begin{equation}\label{eqn_DOLL}
\begin{aligned}
D(2^{k+1}t+2^k-1)
&=\frac{2^k+1}{2^{k+1}}D(t)+\frac{2^k-1}{2^{k+1}}D(t+1)
\\&+\left(\frac 12+\frac{k-1}{2^{k+1}}\right)
\bigl(\cum_2(t+1)-\cum_2(t)\bigr)+
1+\frac{3k-1}{2^k}.
\end{aligned}
%}}} eqn_DOLL end
\end{equation}
\end{lemma}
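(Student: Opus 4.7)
The plan is to prove both identities by induction on $k\geq 0$, using the recurrences for $\cum_2$ given in~\eqref{eqn_dt_rec} and for $D$ given in~\eqref{eqn_D_rec}. The base case $k=0$ reduces to the trivial identities $\cum_2(2t)=\cum_2(t)$ and $D(2t)=D(t)$: in the right-hand sides of \eqref{eqn_OLL_explicit} and \eqref{eqn_DOLL}, the $(2^0-1)$-factors make the $\cum_2(t+1)$-, $D(t+1)$-, and $(\cum_2(t+1)-\cum_2(t))$-terms vanish, and the remaining constants collapse to $0$.

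For the inductive step, the key algebraic observation is
\[
2^{k+2}t + 2^{k+1} - 1 \;=\; 2\bigl(2^{k+1}t + 2^k - 1\bigr) + 1.
\]
Writing $s = 2^{k+1}t + 2^k - 1$, the recurrences give $\cum_2(2s+1)$ and $D(2s+1)$ as explicit combinations of $\cum_2(s), \cum_2(s+1), D(s), D(s+1)$. The induction hypothesis supplies the values at $s$, while $s+1 = 2^k(2t+1)$, so a single additional application of the recurrences yields $\cum_2(s+1) = \frac{\cum_2(t)+\cum_2(t+1)}{2}+1$ and an analogous formula for $D(s+1)$.

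Inserting these into the recurrence for $\cum_2(2s+1) = (\cum_2(s)+\cum_2(s+1))/2+1$ and collecting, the coefficient of $\cum_2(t)$ becomes $\tfrac12\bigl(\tfrac{2^k+1}{2^{k+1}}+\tfrac12\bigr) = \tfrac{2^{k+1}+1}{2^{k+2}}$, symmetrically for $\cum_2(t+1)$, and the constant contributions $\tfrac12 \cdot \tfrac{3(2^k-1)}{2^k} + \tfrac12 + 1$ collapse to $\tfrac{3(2^{k+1}-1)}{2^{k+1}}$, giving~\eqref{eqn_OLL_explicit}. For~\eqref{eqn_DOLL} the same mechanism applies, with one new ingredient: the recurrence for $D(2s+1)$ contains the term $\tfrac{\cum_2(s+1)-\cum_2(s)}{2}$. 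Using the just-established~\eqref{eqn_OLL_explicit}, this difference simplifies to $\tfrac{\cum_2(t+1)-\cum_2(t)}{2^{k+1}} - 2 + \tfrac{3}{2^k}$. Adding it to the averaged induction hypothesis and to the additive constant $1$, the coefficient of $\cum_2(t+1)-\cum_2(t)$ becomes $\tfrac12 + \tfrac{k-1}{2^{k+2}} + \tfrac{1}{2^{k+2}} = \tfrac12 + \tfrac{k}{2^{k+2}}$, and the constant part becomes $1 + \tfrac{3k-1}{2^{k+1}} + \tfrac{3}{2^{k+1}} = 1 + \tfrac{3(k+1)-1}{2^{k+1}}$, which matches~\eqref{eqn_DOLL} with $k$ replaced by $k+1$.

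The main obstacle is purely bookkeeping: each induction step involves summing several rational expressions with denominators $2^{k+1}$ and $2^{k+2}$ and tracking four different linear combinations simultaneously. There is no conceptual difficulty once the identity $s+1 = 2^k(2t+1)$ is exploited to reduce the shifted cumulant back to quantities in $t$ and $t+1$, and the induction goes through in a single pass.
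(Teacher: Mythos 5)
Your proof is correct and follows essentially the same route as the paper: induction on $k$ via the identity $2^{k+2}t+2^{k+1}-1=2s+1$ with $s=2^{k+1}t+2^k-1$ and $s+1=2^k(2t+1)$, applying the recurrences for $\cum_2$ and $D$ and collecting coefficients; your intermediate expressions (e.g.\ $\cum_2(s+1)-\cum_2(s)=\bigl(\cum_2(t+1)-\cum_2(t)\bigr)/2^{k+1}-2+3/2^k$ and the resulting coefficients $\tfrac12+\tfrac{k}{2^{k+2}}$ and $1+\tfrac{3(k+1)-1}{2^{k+1}}$) all check out. The only cosmetic difference is that the paper records the base cases $k=0$ and $k=1$ separately and names the abbreviations $\rho_k,\sigma_k$, while you start the induction directly at $k=0$.
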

%}}} lem_OLL end
%{{{ Proof of lem_OLL
\begin{proof}
The proof of the first part is easy, using induction and the recurrence~\eqref{eqn_dt_rec}.

We continue with the second part.
The statement is trivial for $k=0$ and for $k=1$ it follows from~\eqref{eqn_OLL_explicit}.
We use the abbreviations
$\rho_k=1/2+(k-1)/2^{k+1}$ and $\sigma_k=1+(3k-1)/2^k$.
For $k\geq 1$ we have by induction, using~\eqref{eqn_D_rec} and~\eqref{eqn_OLL_explicit},
\begin{align*}
\hspace{1em}&\hspace{-1em}
D(2^{k+2}t+2^{k+1}-1)
=\frac{D(2^{k+1}t+2^k-1)+D(2t+1)}2
\\&\quad+\frac{\cum_2(2t+1)-\cum_2(2^{k+1}t+2^k-1)}2+1
\\&=
\frac{2^k+1}{2^{k+2}}D(t)+\frac{2^k-1}{2^{k+2}}D(t+1)
+\frac{\rho_k}2\bigl(\cum_2(t+1)-\cum_2(t)\bigr)+\frac{\sigma_k}2
\\&\quad+
\frac{D(t)+D(t+1)}4+\frac{\cum_2(t+1)-\cum_2(t)}4+\frac 12
+\frac{\cum_2(t)+\cum_2(t+1)}4+\frac 12
\\&\quad-\frac 12\left(\frac{2^k+1}{2^{k+1}}\cum_2(t)+\frac{2^k-1}{2^{k+1}}\cum_2(t+1)+3\frac{2^k-1}{2^k}\right)
+1
\\&=\frac{2^{k+1}+1}{2^{k+2}}D(t)+\frac{2^{k+1}-1}{2^{k+2}}D(t+1)+\left(\frac {\rho_k}2+\frac 14+\frac1{2^{k+2}}\right)\bigl(\cum_2(t+1)-\cum_2(t)\bigr)
\\&\quad+\frac{\sigma_k}2+\frac 12+\frac3{2^{k+1}},
\end{align*}
which implies the statement.
\end{proof}
%}}} lem_OLL end
We obtain the following corollary.
%{{{ cor_D_append_OLL
\begin{corollary}\label{cor_D_append_OLL}
For all $t\geq 0$ and $k\geq 1$ we have
\[
D\bigl(2^{k+1}t+2^k-1\bigr)\geq \min\bigl(D(t),D(t+1)\bigr)+\frac k{2^{k-1}}.
\]
\end{corollary}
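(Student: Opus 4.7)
The plan is to feed the explicit formula~\eqref{eqn_DOLL} from Lemma~\ref{lem_OLL} into the statement and then bound each piece. First I would observe that the coefficients
\[
\frac{2^k+1}{2^{k+1}} \quad\text{and}\quad \frac{2^k-1}{2^{k+1}}
\]
sum to $1$ and are both nonnegative, so the first two summands on the right-hand side of~\eqref{eqn_DOLL} form a convex combination, and thus
\[
\frac{2^k+1}{2^{k+1}}D(t)+\frac{2^k-1}{2^{k+1}}D(t+1)\;\geq\;\min\bigl(D(t),D(t+1)\bigr).
\]

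Next I would handle the affine part. The remaining summands in~\eqref{eqn_DOLL} are
\[
\left(\frac12+\frac{k-1}{2^{k+1}}\right)\bigl(\cum_2(t+1)-\cum_2(t)\bigr)+1+\frac{3k-1}{2^k}.
\]
Invoking the bound $\lvert \cum_2(t+1)-\cum_2(t)\rvert\leq 2$ from Lemma~\ref{lem_aj_diff_bounds}~\eqref{eqn_a2_difference_bound}, the first term above is at least $-2\bigl(\tfrac12+\tfrac{k-1}{2^{k+1}}\bigr)=-1-\tfrac{k-1}{2^k}$. Adding the constant $1+\tfrac{3k-1}{2^k}$ gives a lower bound
\[
-1-\frac{k-1}{2^k}+1+\frac{3k-1}{2^k}\;=\;\frac{(3k-1)-(k-1)}{2^k}\;=\;\frac{2k}{2^k}\;=\;\frac{k}{2^{k-1}}.
\]
Combining the two estimates yields the desired inequality.

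This is essentially a computation rather than a proof with a genuine obstacle: the only subtlety is to notice that the two structural ingredients, namely convex-combination-and-$\min$ for the $D$-coefficients and the uniform bound on $\cum_2(t+1)-\cum_2(t)$, together with the explicit affine constant, align so that the negative contribution from the $\cum_2$-difference is absorbed by the positive constant $1+(3k-1)/2^k$, leaving exactly $k/2^{k-1}$ as a genuine gain.
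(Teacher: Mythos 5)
Your proposal is correct and follows essentially the same route as the paper: both feed the explicit formula~\eqref{eqn_DOLL} into the claim, use that the $D$-coefficients form a convex combination to get the $\min$, and absorb the negative contribution of $\cum_2(t+1)-\cum_2(t)$ (bounded via~\eqref{eqn_a2_difference_bound}) into the affine constant, leaving exactly $k/2^{k-1}$. The only cosmetic difference is that the paper keeps the nonnegative remainder $\tfrac12\bigl(\cum_2(t+1)-\cum_2(t)+2\bigr)$ explicit rather than bounding the full coefficient at once.
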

%}}} cor_D_append_OLL end
\begin{proof}
Set $\alpha=\bigl(2^k+1\bigr)/2^{k+1}$ and $\beta=\bigl(2^k-1\bigr)/2^{k+1}$.
By the bound $\lvert \cum_2(t+1)-\cum_2(t)\rvert\leq 2$ from Lemma~\ref{lem_aj_diff_bounds}, it follows from Equation~\eqref{eqn_DOLL} that
\begin{align*}
D\bigl(2^{k+1}t+2^k-1\bigr)
&\geq
\alpha D(t)+\beta D(t+1)
+
\frac 12
\bigl(\cum_2(t+1)-\cum_2(t)+2\bigr)
+\frac k{2^{k-1}}
\\&\geq
\min\bigl(D(t),D(t+1)\bigr)+\frac k{2^{k-1}}.
\qedhere
\end{align*}
\end{proof}
We can now extract the contribution to the value of $D$ of a block of the form $\tO\tL^k\tO$.
For this, we use the notation
\[m(t)=\min\bigl(D(t),D(t+1)\bigr).\]
This notation is introduced in order to obtain the following \emph{monotonicity  property}:
by the recurrence~\eqref{eqn_D_rec} and the nonnegativity of $a(t)=\bigl(\cum_2(t+1)-\cum_2(t)\bigr)/2+1$ we have
\begin{equation}\label{eqn_m_monotonicity}
\begin{aligned}
\min\bigl(m(2t),m(2t+1)\bigr)
&=\min\left(D(t),\frac{D(t)+D(t+1)}2+a(t),D(t+1)\right)
\\&\geq \min\bigl(D(t),D(t+1)\bigr)=m(t)
\end{aligned}
\end{equation}
Note also $m(t)\geq 4$ by~\eqref{eqn_D_2}.
These properties will be used in an essential way in the important Corollary~\ref{cor_exceptions} below, where an induction along the binary expansion of $t$ is used.
%{{{ cor_OLLO
\begin{corollary}\label{cor_OLLO}
For all $t\ge0$ and $k\ge1$ we have
\[m(2^{k+2}t+2^{k+1}-2)\ge m(t)+\frac k{2^k}.\]
\end{corollary}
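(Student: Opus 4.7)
The plan is to derive this corollary as a direct two-fold application of the preceding Corollary~\ref{cor_D_append_OLL}, by recognising that $2^{k+2}t+2^{k+1}-2$ and $2^{k+2}t+2^{k+1}-1$ are each of the form treated there.

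First I would rewrite the base quantity as
\[2^{k+2}t+2^{k+1}-2 \;=\; 2\bigl(2^{k+1}t+2^k-1\bigr),\]
so that the identity $\cum_j(2n)=\cum_j(n)$, and hence $D(2n)=D(n)$, gives
\[D\bigl(2^{k+2}t+2^{k+1}-2\bigr) \;=\; D\bigl(2^{k+1}t+2^k-1\bigr) \;\geq\; m(t)+\frac{k}{2^{k-1}},\]
where the inequality is Corollary~\ref{cor_D_append_OLL} applied to the pair $(t,k)$.

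Next I would apply Corollary~\ref{cor_D_append_OLL} again, this time with the pair $(t,k+1)$, which exactly produces the successor:
\[D\bigl(2^{k+2}t+2^{k+1}-1\bigr) \;\geq\; m(t)+\frac{k+1}{2^{k}}.\]

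Finally, combining both bounds through the definition $m(N)=\min\bigl(D(N),D(N+1)\bigr)$ with $N=2^{k+2}t+2^{k+1}-2$, I obtain
\[m\bigl(2^{k+2}t+2^{k+1}-2\bigr) \;\geq\; m(t)+\min\!\left(\frac{2k}{2^{k}},\,\frac{k+1}{2^{k}}\right) \;=\; m(t)+\frac{k+1}{2^{k}},\]
where the last equality uses $2k\geq k+1$ for $k\geq 1$. Since $(k+1)/2^{k}\geq k/2^{k}$, this gives the claimed inequality (in fact a slightly sharper one).

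There is essentially no obstacle here: the statement is a formal consequence of Corollary~\ref{cor_D_append_OLL} together with the trivial identity $D(2n)=D(n)$, and the only point requiring a moment of thought is the bookkeeping of the exponents — that appending the block $\tO\tL^k\tO$ corresponds to one application at level $k$, while the successor $+1$ turns the final $\tO$ into a $\tL$ and thus corresponds to an application at level $k+1$.
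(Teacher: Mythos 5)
Your proposal is correct and coincides with the paper's own proof: both use $D(2n)=D(n)$ to reduce $D(2^{k+2}t+2^{k+1}-2)$ to $D(2^{k+1}t+2^k-1)$ and apply Corollary~\ref{cor_D_append_OLL} at level $k$, then apply it again at level $k+1$ for the successor, and take the minimum. The only (cosmetic) difference is that you evaluate the minimum exactly as $(k+1)/2^k$, whereas the paper merely bounds it below by $k/2^k$.
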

%}}} cor_OLLO end
%{{{ Proof of cor_OLLO
\begin{proof}
We have $D(2^{k+2}t+2^{k+1}-2)=D(2^{k+1}t+2^k-1)$, and by Corollary~\ref{cor_D_append_OLL} this is bounded below by 
$m(t)
+\frac k{2^{k-1}}$.
Also, $D(2^{k+2}t+2^{k+1}-2+1)=D(2^{k+2}t+2^{k+1}-1)\geq 
m(t)+\frac {k+1}{2^k}$ and clearly,
$\min\bigl(k/2^{k-1},(k+1)/2^k\bigr)\geq k/2^k$.
\end{proof}
%}}} Proof of cor_OLLO end

Moreover, we want to find the contribution of a block of $\tO$s of length $\geq 2$.
For this, we append $\tO\tO\tL$ and look what happens:
note that
\begin{align*}
\cum_2(4t+1)&=\frac{3\cum_2(t)}4+\frac{\cum_2(t+1)}4+\frac32,\\
D(4t+1)&=\frac{3D(t)}4+\frac{D(t+1)}4+\frac{\cum_2(t+1)-\cum_2(t)}2+2
\end{align*}
by~\eqref{eqn_OLL_explicit} and~\eqref{eqn_DOLL}. Therefore, by the recurrence~\eqref{eqn_D_rec}, we obtain
\begin{align*}
D(8t+1)
&=\frac{D(t)+D(4t+1)}2+\frac{\cum_2(4t+1)-\cum_2(t)}2+1
\\&=\frac78D(t)+\frac18D(t+1)+\frac 38\bigl(\cum_2(t+1)-\cum_2(t)\bigr)+\frac{11}4.
\end{align*}
These formulas together with
$D(8t+2)=D(4t+1)$ and $\lvert \cum_2(t+1)-\cum_2(t)\rvert\leq 2$ show that 
%{{{ eqn_OOL
\begin{equation}\label{eqn_OOL}
m(8t+1)\geq m(t)+1.
\end{equation}
%}}} eqn_OOL end
%{{{ cor_exceptions
\begin{corollary}\label{cor_exceptions}
Assume that $k\geq 2$ and $t\geq 1$ are integers.
Let $K$ be the number of inner blocks of $\tO$s of length at least two
in the binary expansion of $t$, and $L$ be the number of blocks of $\tL$s of length $\leq k$. Then 
\[    m(t)\geq 4+K+\max\left(0,\left\lfloor\frac{L-2K-1}2\right\rfloor\right)\frac k{2^k}.    \]

In particular, for all integers $D_0\geq 2$ and $k\geq 2$, there exists a bound $B=B(D_0,k)$ with the following property:
for all integers $t\geq 1$ such that $D(t)\leq D_0$, the number of inner blocks of $\tO$s of length $\geq 2$ in $t$ and the number of blocks of $\tL$s of length $\leq k$ in $t$ are bounded by $B$.
\end{corollary}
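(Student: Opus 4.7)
The plan is to establish the inequality by constructing $t$ from most- to least-significant bit and lower-bounding $m$ along the way, using three ingredients: the monotonicity~\eqref{eqn_m_monotonicity} (appending any single bit does not decrease $m$), the identity~\eqref{eqn_OOL} ($m(8s+1) \geq m(s)+1$, effectively appending the pattern $001$), and Corollary~\ref{cor_OLLO} ($m(2^{j+2}s + 2^{j+1} - 2) \geq m(s) + j/2^j$, effectively appending $01^j 0$). Since $x/2^x$ is decreasing on integer $x \geq 2$ and $k \geq 2$, every Corollary~\ref{cor_OLLO}-application with $1 \leq j \leq k$ contributes at least $k/2^k$.

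Without loss of generality $t$ is odd (trailing zeros affect neither $K$, $L$, nor the lower bound on $m$, via~\eqref{eqn_m_monotonicity}), so write $t = 1^{a_1} 0^{b_1} 1^{a_2} \cdots 0^{b_{N-1}} 1^{a_N}$. The construction initializes $s_0 = 1^{a_1}$, for which $m(s_0) \geq 4$ by~\eqref{eqn_D_2}, and proceeds by a greedy left-to-right schedule: at each inner $\tO$-block of length $\geq 2$ apply~\eqref{eqn_OOL} (preceded by monotonic zero-padding to position the pattern $001$ correctly), thereby collecting the $+K$ summand; and at as many short inner $\tL$-blocks as the conflict structure allows apply Corollary~\ref{cor_OLLO} similarly padded, collecting $+k/2^k$ each. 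All remaining bits are filled in by monotonicity.

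The main obstacle I anticipate is the combinatorial conflict analysis showing that the schedule fits at least $Q = \max\bigl(0,\lfloor (L - 2K - 1)/2 \rfloor\bigr)$ Corollary~\ref{cor_OLLO}-applications. An~\eqref{eqn_OOL}-application at $b_i$ writes the first $\tL$ of $1^{a_{i+1}}$, blocking Corollary~\ref{cor_OLLO} there (and, if $b_i = 2$, also at $1^{a_i}$); two Corollary~\ref{cor_OLLO}-applications at $1^{a_i}$ and $1^{a_{i+1}}$ are compatible only when $b_i \geq 2$; and the boundary blocks $1^{a_1}$ and $1^{a_N}$ can never be Corollary~\ref{cor_OLLO}-targets (the construction begins with $1^{a_1}$, while $t$ is odd but Corollary~\ref{cor_OLLO} produces a trailing $\tO$). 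Tracking these constraints shows that of the $L$ short $\tL$-blocks, at least one is lost to the boundary and at most $2K$ to the \eqref{eqn_OOL}-applications, and each scheduled Corollary~\ref{cor_OLLO}-application consumes two of the remaining short $\tL$-blocks (target plus a neighbor forced to be skipped via a length-$1$ $\tO$-block), giving the floor. Making this bookkeeping uniformly rigorous across all block configurations is the technical heart of the proof.

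The second statement follows at once: $m(t) \leq D(t) \leq D_0$, so the first part forces $4 + K \leq D_0$, hence $K \leq D_0 - 4$, and $\max\bigl(0, \lfloor (L-2K-1)/2\rfloor\bigr) \leq (D_0-4-K)\cdot 2^k/k$, bounding $L$ by roughly $2(D_0-4)\cdot 2^k/k + 2(D_0-4) + 1$; both quantities are captured by an explicit $B = B(D_0, k)$.
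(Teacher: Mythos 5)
Your proposal follows essentially the same route as the paper's proof: a left-to-right induction along the binary expansion starting from $m(1^{a_1})\geq 4$, collecting $+1$ via~\eqref{eqn_OOL} at each inner block of $\tO$s of length $\geq 2$, collecting $+k/2^k$ via Corollary~\ref{cor_OLLO} at usable short blocks of $\tL$s, filling the rest by the monotonicity~\eqref{eqn_m_monotonicity}, and with the identical conflict accounting (each of the $K$ long $\tO$-blocks disables its two neighbouring $\tL$-blocks, only every second remaining short block is usable, first and last excluded) yielding the floor $\lfloor(L-2K-1)/2\rfloor$; the ``in particular'' part via $m(t)\leq D(t)$ is also the paper's argument. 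The combinatorial bookkeeping you flag as the technical heart is treated at the same (brief) level of detail in the paper itself, so there is no substantive gap relative to the published proof.
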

%}}} cor_exceptions end

\begin{proof}
We are going to apply~\eqref{eqn_OOL} $K$ times and Corollary~\ref{cor_OLLO} $\lfloor (L-2K-1)/2\rfloor$ times, using the monotonicity of $m$ expressed in~\eqref{eqn_m_monotonicity} in an essential way.
We proceed by induction along the binary expansion of $t$, beginning at the most significant digit.
The constant $4$ is explained by the starting value $m(1)=\min(D(1),D(2))=4$.
Each inner block of $\tO$s of length $\geq 2$ (bordered by $\tL$s on both sides)
corresponds to a factor $\tO\tO\tL$ in the binary expansion: we simply choose the block of length three starting at the second zero from the right.
Therefore~\eqref{eqn_OOL} explains the contribution $K$.
For the application of Corollary~\ref{cor_OLLO} we need a block of the form $\tO\tL^r\tO$ with $r\geq 1$, but we cannot guarantee that the adjacent blocks of $\tO$s have not already been used for~\eqref{eqn_OOL}.
Therefore each of the $K$ inner blocks of $\tO$s of length $\geq 2$ renders the two adjacent blocks of $\tL$s unusable for the application of Corollary~\ref{cor_OLLO}.
Out of the remaining blocks of $\tL$s of length $\le k$, we can only use each second block, and the first and the last blocks of $\tL$s are excluded also.
That is, if $L-2K\in\{3,4\}$, we can apply Corollary~\ref{cor_OLLO} once, for $L-2K\in\{5,6\}$ twice, and so on.
Finally, we note that $k/2^k$ is nonincreasing.
This explains the last summand. 
\end{proof}

In the following, we will only use the ``in particular''-statement of Corollary~\ref{cor_exceptions}.

\subsection{Bounds for \texorpdfstring{$\cum_4$}{kappa4} and \texorpdfstring{$\cum_5$}{kappa5}}
\label{sec:boundsK4K5}
%{{{ lem_cum4_upper_bound
\begin{lemma}\label{lem_cum4_upper_bound}
Assume that $t$ contains $M$ blocks of $\tL$s.
Then
\[\cum_4(t)\leq 26(M+1).\] %todo
\end{lemma}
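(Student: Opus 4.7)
My plan is to induct on the binary length of $t$, using the recurrence~\eqref{eqn_dt_rec}. Since $\cum_4(2t)=\cum_4(t)$ and the number of blocks of $\tL$s is preserved under $t\mapsto 2t$, I reduce at once to the case of odd $t = 2s+1$, for which the recurrence reads
\[
\cum_4(2s+1) = \tfrac12\bigl(\cum_4(s) + \cum_4(s+1)\bigr) + P_s,
\]
where $P_s := 2(\cum_3(s)-\cum_3(s+1)) + \tfrac34(\cum_2(s)-\cum_2(s+1))^2 - 2$. Lemma~\ref{lem_aj_diff_bounds} gives the sharp upper bound $P_s \le 2\cdot 6 + \tfrac34\cdot 4 - 2 = 13$, which is exactly half the per-block budget of $26$ allowed by the target inequality. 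This match is the quantitative reason the constant $26$ appears in the statement.

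For the base case $M=1$ one has $t = 2^r-1$ after stripping trailing zeros, and a sub-induction on $r$ using $\cum_4(2^{r-1}) = \cum_4(1) = 26$ reduces to iterating the affine map $x\mapsto (x+26)/2 + 13$. Its fixed point is $52 = 26(M+1)$, so the bound follows.

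For $M\ge 2$, the plan is to split $s$ into four cases based on its rightmost bits and to compute $M(s)$, $M(s+1)$, $M(2s+1)$ in each: (i) $s$ ends in $\tO^{\ge 2}$; (ii) $s$ ends in exactly one $\tO$ preceded by $\tL$; (iii) $s$ is odd and ends in $\tO\tL^j$ with exactly one $\tO$ preceded by $\tL$; (iv) $s$ is odd and ends in $\tO^r\tL^j$ with $r\ge 2$, $j\ge 1$. A direct verification (a short enumeration of how adding one, doubling, and adding one again affects the block count) shows that in cases (i)--(iii) the counting inequality $M(s)+M(s+1)+1 \le 2M(2s+1)$ holds, and then the induction hypothesis combined with $P_s \le 13$ yields $\cum_4(2s+1) \le 26(M(2s+1)+1)$ immediately.

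The hard part will be case (iv), where $M(s) = M(s+1) = M(2s+1)$ and the averaging argument loses exactly $13$. The plan is to recover this deficit by showing $P_s \le 0$ in this regime. Writing $s = 2^jm + 2^j - 1$ with $m = 2^rm'$, $m'$ odd, and $r\ge 2$, one obtains closed-form expressions for $\cum_2(s+1) - \cum_2(s)$ and $\cum_3(s+1) - \cum_3(s)$ by repeated application of~\eqref{eqn_dt_rec} along the trailing $\tO^r\tL^j$-block; these differences turn out to be small enough that the negative summand $-2$ in $P_s$ dominates (representative numerical check: $P_{19} \approx -1.80$ for $s = 19 = 10011_2$), closing the induction also in the exceptional case.
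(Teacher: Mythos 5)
Your reduction to odd $t=2s+1$, the identification of the correction term $P_s\le 13$, the base case $M=1$, and the counting inequality $M(s)+M(s+1)+1\le 2M(2s+1)$ in cases (i)--(iii) all check out. The gap is your case (iv): the claim that $P_s\le 0$ whenever $s$ ends in $\tO^r\tL^j$ with $r\ge 2$ is false. Your test value $s=19=\tL\tO\tO\tL\tL$ only works because the trailing block of $\tL$s is short. Take instead $s=39=\tL\tO\tO\tL\tL\tL$ (still $r=2$, now $j=3$): the recurrences give $\cum_2(39)=81/16$, $\cum_2(40)=7/2$, $\cum_3(39)=-6$, $\cum_3(40)=-15/2$, hence
\begin{equation*}
P_{39}=2\cdot\tfrac32+\tfrac34\Bigl(\tfrac{25}{16}\Bigr)^2-2=\tfrac{2899}{1024}\approx 2.83>0,
\end{equation*}
while $M(39)=M(40)=M(79)=2$, so your induction step for $t=79$ does not close. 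This is not repairable within your scheme: by the paper's own asymptotics~\eqref{eqn_cum2_1k} and~\eqref{eqn_cum3_1k}, lengthening the trailing block of $\tL$s drives $\cum_2(s)-\cum_2(s+1)\to 2$ and $\cum_3(s)-\cum_3(s+1)\to 6$, so $P_s\to 13$ along case (iv) --- the entire deficit of $13$ returns. The structural reason is that a bit-by-bit accounting can only ``pay'' for the first bit of each block of $\tL$s (where either $M$ increases or, as in your case (iii), $M(s+1)=M(s)-1$ creates slack); every subsequent bit of a long block costs up to $13$ with nothing left to charge it to.

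The paper sidesteps this by processing an entire maximal block in one step and exploiting geometric damping: iterating $\cum_4(2t+1)\le\frac12\bigl(\cum_4(t)+\cum_4(t+1)\bigr)+13$ across a whole block $\tL^k$ yields a total increment of at most $13\bigl(1+\tfrac12+\cdots+\tfrac1{2^{k-1}}\bigr)\le 26$ for the block, rather than $13k$, and the induction is run on the pair $\bigl(\cum_4(t),\cum_4(t+1)\bigr)$ (through $\max(\cum_4(t),\cum_4(t+1))$) over the number of blocks rather than over the binary length. To rescue your argument you would need to strengthen the induction hypothesis so that it remembers where you are inside the current block of $\tL$s and retains control of $\cum_4(s+1)$ separately --- which essentially reconstructs the paper's block-by-block proof.
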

%}}} lem_cum4_upper_bound
%{{{ proof of lem_cum4_upper_bound
\begin{proof}
Recall that $\cum_4(1)=26$ by~\eqref{eqn_fubini}.
Using~\eqref{eqn_dt_rec} and the estimates from Lemma~\ref{lem_aj_diff_bounds} we get 
\[\cum_4(2t+1)\leq \frac{\cum_4(t)+\cum_4(t+1)}2+13.\]
Using the geometric series, this implies
\begin{equation}\label{eqn_A4_LL}
\cum_4\bigl(2^kt+2^k-1\bigr)
\leq
\frac{\cum_4(t)}{2^k}
+\frac{\bigl(2^k-1\bigr)\cum_4(t+1)}{2^k}+26.
\end{equation}
The statement for $M=1$ easily follows.
We also study $t'=2^kt+1$:
In this case, we have
\begin{equation}\label{eqn_A4_OLL}
\cum_4\bigl(2^kt+1\bigr)
\geq 
\frac{\bigl(2^k-1\bigr)\cum_4(t)}{2^k}
+\frac{\cum_4(t+1)}{2^k}+13
\end{equation}
by induction.
We consider the values $n(t)=\min(\cum_4(t),\cum_4(t+1))$
and prove the stronger statement that $n(t)\geq 26(M+1)$ by induction.
We append a block $\tL^k$ to $t$ and obtain $t'=2^kt+2^k-1$.
Then
\[\cum_4(t')\leq \frac{\cum_4(t)}{2^k}+\frac{\bigl(2^k-1\bigr)\cum_4(t+1)}{2^k}+26
\leq \min(\cum_4(t),\cum_4(t+1))+26=n(t)+26,\]
and $\cum_4(t'+1)=\cum_4(t+1)$.
Analogously, we append $\tO^k$ to $t$ and obtain $t'=2^kt$.
Clearly, $\cum_4(t')=\cum_4(t)$, and
\[\cum_4(t'+1)\leq \frac{\bigl(2^k-1\bigr)\cum_4(t)}{2^k}+\frac{\cum_4(t+1)}{2^k}+26
\geq n(t)+26.\]
This implies the statement.
\end{proof}
%}}} proof of lem_cum4_upper_bound end

We want to find a lower bound for $\cum_5(t)$.
In the following, we consider the behavior of the differences $\cum_j(t)-\cum_j(t+1)$ when a block of $\tL$s is appended to $t$.
We do so step by step, starting with $\cum_2(t)$.
Assume that $k\geq 1$ is an integer and set $t^{(k)}=2^kt+2^k-1$.
Note that by~\eqref{eqn_dt_rec} we have $\cum_{j}(t^{(k)}+1) = \cum_j(t+1)$.
By the recurrence~\eqref{eqn_coeff_2} we obtain
\begin{align*}
\cum_2\bigl(t^{(k)}\bigr)-\cum_2\bigl(t^{(k)}+1\bigr)
&=\frac{\cum_2\bigl(t^{(k-1)}\bigr)+\cum_2(t+1)}2+1-\cum_2(t+1)
\\&=\frac{\cum_2\bigl(t^{(k-1)}\bigr)-\cum_2(t+1)}2+1,
\end{align*}
which gives by induction
%{{{ eqn_cum2_1k
\begin{equation}\label{eqn_cum2_1k}
\begin{aligned}
\cum_2\bigl(t^{(k)}\bigr)-\cum_2\bigl(t^{(k)}+1\bigr)
&=\frac{\cum_2(t)-\cum_2(t+1)}{2^k}+\frac{2^k-1}{2^{k-1}}
\\&=2+\mathcal O\bigl(2^{-k}\bigr).
\end{aligned}
\end{equation}
%}}} eqn_cum2_1k
We proceed to $\cum_3(t)$. For $k\geq 1$, we have
\begin{align*}
\cum_3\bigl(t^{(k)}\bigr)-\cum_3\bigl(t^{(k)}+1\bigr)
&=\frac{\cum_3\bigl(t^{(k-1)}\bigr)-\cum_3(t+1)}2
+3+\mathcal O\bigl(2^{-k}\bigr)
\end{align*}
by~\eqref{eqn_coeff_3} and~\eqref{eqn_cum2_1k}.
By induction and the geometric series we obtain
%{{{ eqn_cum3_1k
\begin{equation}\label{eqn_cum3_1k}
\begin{aligned}
\cum_3\bigl(t^{(k)}\bigr)-\cum_3\bigl(t^{(k)}+1\bigr)
&=\frac{\cum_3(t)-\cum_3(t+1)}{2^k}
+6+\mathcal O\bigl(k2^{-k}\bigr)
\\&=6+\mathcal O\bigl(k2^{-k}\bigr).
\end{aligned}
\end{equation}
%}}} eqn_cum3_1k

Concerning $\cum_4(t)$, we have by~\eqref{eqn_coeff_4},~\eqref{eqn_cum2_1k}, and~\eqref{eqn_cum3_1k}
\begin{align*}
\hspace{3em}&\hspace{-3em}
\cum_4\bigl(t^{(k)}\bigr)-\cum_4\bigl(t^{(k)}+1\bigr)
=\frac{\cum_4\bigl(t^{(k-1)}\bigr)-\cum_4(t+1)}2
+2\bigl(\cum_3\bigl(t^{(k-1)}\bigr)-\cum_3(t^{(k-1)}+1)\bigr)
\\&+\frac34\left(\cum_2\bigl(t^{(k-1)}\bigr)-\cum_2(t^{(k-1)}+1)\right)^2-2
\\&=
\frac{\cum_4\bigl(t^{(k-1)}\bigr)+\cum_4(t+1)}2
+12+\mathcal O(k2^{-k})
+\frac34\left(2+\mathcal O(2^{-k})\right)^2-2
\\&=
\frac{\cum_4\bigl(t^{(k-1)}\bigr)-\cum_4(t+1)}2
+13+\mathcal O(k2^{-k})
\end{align*}
and by induction we obtain

%{{{ eqn_cum4_1k
\begin{equation}\label{eqn_cum4_1k}
\cum_4\bigl(t^{(k)}\bigr)-\cum_4\bigl(t^{(k)}+1\bigr)
=26+\mathcal O\bigl(k^22^{-k}\bigr).
\end{equation}
%}}} eqn_cum4_1k

Finally, we have by~\eqref{eqn_coeff_5},~\eqref{eqn_cum2_1k},~\eqref{eqn_cum3_1k}, and~\eqref{eqn_cum4_1k}
\[
\begin{aligned}
\hspace{1em}&\hspace{-1em}
\cum_5\bigl(t^{(k)}\bigr)-\cum_5\bigl(t^{(k)}+1\bigr)
=\frac{\cum_5\bigl(t^{(k-1)}\bigr)-\cum_5(t+1)}2
+\frac 52\Bigl(\cum_4\bigl(t^{(k-1)}\bigr)-\cum_4(t^{(k-1)}+1)\Bigr)
\\
&+\frac52\Bigl(\cum_2\bigl(t^{(k-1)}\bigr)-\cum_2(t^{(k-1)}+1)\Bigr)\Bigl(\cum_3\bigl(t^{(k-1)}\bigr)-\cum_3(t^{(k-1)}+1)\Bigr)
\\&
-10\Bigl(\cum_2\bigl(t^{(k-1)}\bigr)-\cum_2(t^{(k-1)}+1)\Bigr)
=
\frac{\cum_5\bigl(t^{(k-1)}\bigr)-\cum_5(t+1)}2
+65+\mathcal O\bigl(k^22^{-k}\bigr)\\&
+\frac52\bigl(2+\mathcal O\bigl(2^{-k}\bigr)\bigr)\bigl(6+\mathcal O\bigl(k2^{-k}\bigr)\bigr)
-20+\mathcal O\bigl(2^{-k}\bigr)
\\&=
\frac{\cum_5\bigl(t^{(k-1)}\bigr)-\cum_5(t+1)}2
+75+\LandauO\bigl(k2^{-k}\bigr).
\end{aligned}
\]
and therefore by induction
%{{{ eqn_cum5_1k
\begin{equation}\label{eqn_cum5_1k}
\cum_5\bigl(t^{(k)}\bigr)-\cum_5\bigl(t^{(k)}+1\bigr)
=150+\mathcal O\bigl(k^32^{-k}\bigr).
\end{equation}
%}}} eqn_cum5_1k end

%{{{ prp_cum5_lower_bound
\begin{proposition}\label{prp_cum5_lower_bound}
Let $k\geq 1$ be an integer.
Assume that the integer $t\geq 1$ has $N_0$ inner blocks of zeros of length $\geq 2$, and $N_1$ blocks of $\tL$s of length $\leq k$. Define $N=N_0+N_1$.
If $N_2$ is the number of blocks of $\tL$s of length $>k$, we have
\[
\cum_5(t)\geq 150N_2-C\bigl(N+N_2k^32^{-k}\bigr)
\]
with an absolute constant $C$.
\end{proposition}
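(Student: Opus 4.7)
My plan is to trace $\cum_5$ as we build $t$ from the least-significant end, block by block. Because $\cum_j(2^r n) = \cum_j(n)$, trailing $\tO$-blocks are immaterial and we may assume $t$ is odd; then the rightmost block is a $\tL^s$-block and we can write $t = 2^s t' + 2^s - 1$ with $t'$ even. Equation~\eqref{eqn_cum5_1k} gives
\[\cum_5(t) = \cum_5(t'+1) + 150 + \mathcal O(s^3 2^{-s}),\]
which records the gain from peeling off this block. Note that for $s > k$ the error is $\mathcal O(k^3 2^{-k})$, since $x \mapsto x^3 2^{-x}$ is decreasing on $[4,\infty)$, so the contribution of each long $\tL$-block to the error term of the proposition is of the right form.

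To reduce further, write $t' = 2^r t''$ with $t''$ odd and iterate the recurrence~\eqref{eqn_dt_rec} for $\cum_5(2m+1)$. Together with Lemma~\ref{lem_aj_diff_bounds} this expresses $\cum_5(2^r t''+1)$ as a convex combination of $\cum_5(t'')$ and $\cum_5(t''+1)$ up to an absolutely bounded additive error; in particular
\[\cum_5(2^r t''+1) \geq \min\bigl(\cum_5(t''), \cum_5(t''+1)\bigr) - C_0\]
for an absolute constant $C_0$. The induction must accordingly be strengthened so as to simultaneously lower-bound $\cum_5(t)$ \emph{and} $\cum_5(t+1)$, much as the proof of Lemma~\ref{lem_cum4_upper_bound} uses the auxiliary quantity $\min(\cum_4(t), \cum_4(t+1))$; the $N_0, N_1, N_2$ attached to $t+1$ agree with those of $t$ up to $\pm 1$, which will be harmless.

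Iterating over blocks then gives the stated accumulation: each long $\tL$-block ($s > k$) contributes $+150$ with loss $\mathcal O(k^3 2^{-k})$, while each inner $\tO$-block of length $\ge 2$ and each short $\tL$-block contributes an absolutely bounded loss absorbed into the term $-CN$. The main obstacle will be controlling the single-$\tO$ separators between consecutive long $\tL$-blocks: a naive bound would charge $\mathcal O(1)$ per such separator, giving a spurious $\mathcal O(M)$-contribution to the error. The resolution exploits the stabilisation encoded in \eqref{eqn_cum2_1k}--\eqref{eqn_cum4_1k}: once enough long $\tL$-blocks have been appended, the differences $\cum_j(\tau) - \cum_j(\tau+1)$ are close to the asymptotic values $(2, 6, 26)$, so the ``stuff''-term in the recurrence for $\cum_5(2\tau+1)$ equals $75$ up to $\mathcal O(k^3 2^{-k})$, and the composite operation ``append $\tO$, then append $\tL^s$'' yields the gain $+150$ up to exponentially small corrections. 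Propagating the expansions \eqref{eqn_cum2_1k}--\eqref{eqn_cum5_1k} along the induction, rather than losing a constant at each step, is where the bulk of the technical work lies.
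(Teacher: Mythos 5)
Your outline follows the same strategy as the paper --- an induction along the binary expansion that peels off one block $\tO^r\tL^s$ at a time, extracts $+150$ per long $\tL$-block from~\eqref{eqn_cum5_1k}, and charges an absolutely bounded loss to $N$ for each inner $\tO$-block of length $\geq 2$ and each short $\tL$-block --- and you correctly diagnose the one place where this naive accounting fails, namely the single-$\tO$ separators between consecutive long blocks. But that is exactly the step you do not carry out. The reduction you set up, $\cum_5(2^rt''+1)\geq\min\bigl(\cum_5(t''),\cum_5(t''+1)\bigr)-C_0$, forfeits an absolute constant at every separator; with $N_2$ long blocks separated by single $\tO$s this accumulates to a loss of order $N_2$, which is fatal, since the final inequality~\eqref{eqn_lincomb_sufficient} survives on a margin of only a few units per block (one needs $\cum_5(t)\geq 150N_2-\LandauO(N_2k^32^{-k})-\LandauO(N)$, and a term $-C_0N_2$ with $C_0$ of the size delivered by Lemma~\ref{lem_aj_diff_bounds} cannot be absorbed). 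Declaring that the resolution ``is where the bulk of the technical work lies'' therefore leaves a genuine gap at the only nontrivial point of the proof. Moreover, the fix you sketch is not quite the right mechanism: the stabilisation in~\eqref{eqn_cum2_1k}--\eqref{eqn_cum4_1k} takes place \emph{within} a single appended block of $\tL$s (geometric convergence in the block length, valid for an arbitrary base integer), not ``once enough long $\tL$-blocks have been appended'', so no propagation of asymptotic expansions along the induction is required.

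The paper sidesteps the separator loss by a simpler device. Writing $t'=2^{r+1}t+2^r-1$ for the prefix $t$ followed by $\tO\tL^r$, one has $t'+1=2^r(2t+1)$, so~\eqref{eqn_cum5_1k} applied with base integer $2t$ gives $\cum_5(t')=\cum_5(2t+1)+150+\LandauO(r^32^{-r})$ in a single stroke; the induction hypothesis is then invoked for the integer $2t+1$ itself (the prefix with its final $\tL$-block extended by one digit), whose counts $N$ and $N_2$ agree with those of $t$ except in the boundary case where that final block has length exactly $k$, which costs only one further bounded amount chargeable as before. Nothing at all is lost at the separator, no $\min$-strengthening is needed for $\cum_5$, and no expansions are carried along the induction. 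If you replace your min-based reduction to $t''$ by this application of the hypothesis to $2t''+1$, your argument becomes the paper's proof; as it stands, the decisive estimate is missing.
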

%}}} prp_cum5_lower_bound end
%{{{ proof of prp_cum5_lower_bound
\begin{proof}
We proceed by induction on the number of blocks of $\tL$s in $t$.
The statement obviously holds for $t=0$.
Clearly, by the identity $\cum_5(2t)=\cum_5(t)$ we may append $\tO$s, preserving the truth of the statement (note that $N$ and $N_2$ are unchanged, since we only count \emph{inner} blocks of $\tO$s).
We therefore consider, for $r\geq 1$, appending a block of the form $\tO\tL^r$ to $t$, obtaining $t'=2^{r+1}t+2^r-1$.
Define the integers $N'$ and $N_2'$ according to this new value $t'$.
If $t$ is even, an additional block of zeros of length $\geq 2$ appears,
therefore $N'\geq N+1$, moreover $N_2'\leq N_2+1$.
By the bound $\lvert \cum_5(m+1)-\cum_5(m)\rvert\leq 240$ from~\eqref{eqn_a5_difference_bound}, $\cum_5(2n)=\cum_5(n)$, and the induction hypothesis we have
\begin{equation}\label{eqn_cum5_caseOOL}
\begin{aligned}
\cum_5\bigl(t'\bigr)&
=\bigl(\cum_5(t')-\cum_5(2t+1)\bigr)
+\bigl(\cum_5(2t+1)-\cum_5(t)\bigr)
+\cum_5(t)
\\&\geq \cum_5(t)-480
\geq 150N_2-C\bigl(N+N_2k^32^{-k}\bigr)+480
\\&\geq 150N_2-C\bigl(N'+N_2'k^32^{-k}\bigr)
\end{aligned}
\end{equation}
if $C$ is chosen large enough.
The case of odd $t$ remains.
The integer $t$ ends with a block of $\tL$s of length $s\geq 1$. We distinguish between three cases.
First, let $r\leq k$.
In this case, $N'=N+1$ and $N_2'=N_2$,
and reusing the calculation~\eqref{eqn_cum5_caseOOL} yields the claim.

In the case $r>k$, we have $N'=N$ and $N_2'=N_2+1$.
This case splits into two subcases.
Assume first that $s\neq k$.
We first consider the integer $t''=2t+1$.
The quantities $N''$ and $N_2''$ corresponding to the integer $t''$ satisfy $N''=N$ and $N_2''=N_2$ due to the restriction $s\neq k$,
and by hypothesis --- recall that the induction is on the number of blocks of $\tL$s in $t$ --- we have
\begin{equation}\label{eqn_tpp_hypothesis}
\cum_5(2t+1)\geq 150N_2-C\bigl(N+N_2k^32^{-k}\bigr).
\end{equation}
In this case, we need to extract the necessary gain of $150$ from~\eqref{eqn_cum5_1k}:
this formula yields together with~\eqref{eqn_tpp_hypothesis}
\begin{align*}
\cum_5(t')&=\cum_5(2t+1)+150+\mathcal O\bigl(k^32^{-k}\bigr)
\\&\geq 150N'_2-C\bigl(N'+N_2'k^32^{-k}\bigr)
\end{align*}
if $C$ is chosen appropriately.
Finally, we consider the subcase $s=k$, and again we set $t''=2t+1$ and choose $N''$ and $N_2''$ accordingly.
Here we have $N''=N-1=N'-1$ and $N_2''=N_2+1=N_2'$, and therefore by hypothesis
\[\cum_5(2t+1)\geq 150N_2-C\bigl((N-1)+(N_2+1)k^32^{-k}\bigr).\]
By the bound~\eqref{eqn_a5_difference_bound} we have
\begin{align*}
\cum_5(t')&\geq \cum_5(2t+1)-240\geq 150N'_2-C\bigl(N'+N_2'k^32^{-k}\bigr).
\end{align*}

This finishes the proof of Proposition~\ref{prp_cum5_lower_bound}.
\end{proof}
%}}} proof of prp_cum5_lower_bound end
%}}}
%{{{ subsection Finishing the proof of the main theorem
\subsection{Finishing the proof of the main theorem}
\label{sec:endmainproof}

By Lemma~\ref{lem_ct_uncritical} there is a constant $D_0$ such that $c_t>1/2$ if $D(t)\geq D_0$.
Assume that $C$ is the constant from Proposition~\ref{prp_cum5_lower_bound} and choose $k$ large enough such that $Ck^32^{-k}\leq 20$.
Choose $B=B(D_0,k)$ as in Corollary~\ref{cor_exceptions} and assume that $D(t)\leq D_0$.
The number $N_0$ of inner blocks of $\tO$s of length $\geq 2$ in $t$ and the number $N_1$ of blocks of $\tL$s of length $\leq k$ in $t$ are bounded by $B$ by this corollary.
Furthermore, recall that $M=N_1+N_2$, where $N_2$ is the number of blocks of $\tL$s of length $> k$.
Therefore by Proposition~\ref{prp_cum5_lower_bound},
\[\cum_5(t)\geq 130M-CB.\]
If $t$ contains sufficiently many blocks of $\tL$s, we therefore have by Lemmas~\ref{lem_a2_lower_bound} and~\ref{lem_cum4_upper_bound}
\begin{align*}
53\cum_2(t)
-4\cum_4(t)
+\frac25\cum_5(t)
&\geq 53M-104(M+1)+52M-\frac45CB
\\&= M-\frac45CB-104.
\end{align*}
For large $M$ this is positive, and by~\eqref{eqn_lincomb_sufficient} it follows that $c_t>1/2$ for sufficiently many (greater than some absolute bound) blocks of $\tL$s. The proof is complete.
%}}}
%}}}
%{{{ section Normal distribution of \delta(j,t)
\section{Normal distribution of \texorpdfstring{$\delta(j,t)$}{delta(j,t)}}
In this section we prove Theorem~\ref{thm_normal}.
By~\eqref{eqn_gamma_integral} we have
\[
\delta(j,t)=\int_{-1/2}^{1/2} \gamma_t(\vartheta)\e(-j\vartheta)\,\mathrm d\vartheta.
\]
As above, we truncate the integral at $\pm\vartheta_0$,
where
\[\vartheta_0=M^{-1/2}R,\]
$M=2M'+1$ is the number of blocks of $\tL$s in $t$,
and $R$ is chosen later.
In analogy to the reasoning above, we assume that

\begin{equation*}%\label{eqn_technical2}
\begin{aligned}
8&\leq R\leq M^{1/6}\quad\mbox{and}\quad
\vartheta_0\leq \frac 1\tau.
\end{aligned}
\end{equation*}
Again, by our choice of $R=\log M$ below, this will be satisfied for a sufficiently large number~$M$ of blocks.
We define a coarser approximation of $\gamma_t(\vartheta)$ than used for the proof of our main theorem, as it is sufficient to derive the normal distribution-statement.
Let
\begin{equation*}%\label{eqn_gamma2_def}
\begin{aligned}
\gamma^{(2)}_t(\vartheta)&=
\exp\left(-\cum_2(t)\frac{(\tau\vartheta)^2}2\right),\\
\widetilde\gamma^{(2)}_t(\vartheta)
&=\gamma_t(\vartheta)-\gamma^{(2)}_t(\vartheta).
\end{aligned}
\end{equation*}
The proof of the following estimate essentially only requires to change some numbers in the proof of Proposition~\ref{prp_tildegamma_est} and we leave it to the interested reader.
%{{{ prp_tildegamma_est
\begin{proposition}\label{prp_tildegamma_est2}
There exists an absolute constant $C$
such that we have
\begin{equation*}%\label{eqn_paired_inequalities}
\bigl\lvert\widetilde\gamma^{(2)}_t(\vartheta)\bigr\rvert
\leq CM\vartheta^3
\end{equation*}
for $\lvert\vartheta\rvert\leq \min\bigl(M^{-1/3},\tau^{-1}\bigr)$,
where $M$ is the number of blocks of $\tL$s in $t$.
\end{proposition}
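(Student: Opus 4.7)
The plan is to run the same induction along the binary expansion of $t$ as in the proof of Proposition~\ref{prp_tildegamma_est}, with the coarser approximation $\gamma^{(2)}_t$ in place of $\gammap_t$. First I would set up the analogous recurrences. Since $\cum_2(2t)=\cum_2(t)$, we immediately have $\widetilde\gamma^{(2)}_{2t}(\vartheta)=\widetilde\gamma^{(2)}_t(\vartheta)$, while the odd branch of~\eqref{eqn_gamma_recurrence} yields
\[
\widetilde\gamma^{(2)}_{2t+1}(\vartheta)
=\frac{\e(\vartheta)}2\widetilde\gamma^{(2)}_t(\vartheta)
+\frac{\e(-\vartheta)}2\widetilde\gamma^{(2)}_{t+1}(\vartheta)
+\xi^{(2)}_t(\vartheta),
\]
where $\xi^{(2)}_t$ is defined in analogy with~\eqref{eqn_xi_def} by replacing every occurrence of $\gammap$ by $\gamma^{(2)}$.

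The crucial step is the uniform estimate $\bigl|\xi^{(2)}_t(\vartheta)\bigr|\leq C\vartheta^3$ for $|\vartheta|\leq 1/\tau$. I would prove it by dividing by $\gamma^{(2)}_t(\vartheta)$ (which has modulus $\leq 1$ since its exponent is a negative real), turning the right-hand side into $\e(\vartheta)/2$ plus two ratios of the form $\exp\bigl(-a(\tau\vartheta)^2/2\bigr)$ whose multipliers $a=\cum_2(t+1)-\cum_2(t)$ and $a=\cum_2(2t+1)-\cum_2(t)$ are bounded by absolute constants thanks to Lemma~\ref{lem_aj_diff_bounds}. The coefficients of $\vartheta^0,\vartheta^1,\vartheta^2$ in this expression cancel: the first two by direct computation ($\frac12+\frac12-1=0$ and cancellation of the linear parts of $\e(\pm\vartheta)$), and the $\vartheta^2$ term precisely by the recurrence $\cum_2(2t+1)=(\cum_2(t)+\cum_2(t+1))/2+1$. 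The remaining coefficients $\vartheta^k$ for $k\ge 3$ are then controlled by the same factorial-decay argument as in the proof of Proposition~\ref{prp_tildegamma_est}, yielding the desired bound with an absolute constant $C$.

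With the defect bound in hand, I would carry out the parallel induction on the binary length of $t$, proving
\[
\bigl|\widetilde\gamma^{(2)}_t(\vartheta)\bigr|\leq 2CM\vartheta^3,\qquad
\bigl|\widetilde\gamma^{(2)}_{t+1}(\vartheta)\bigr|\leq 2CM\vartheta^3,\qquad
\bigl|\xi^{(2)}_t(\vartheta)\bigr|\leq C\vartheta^3,
\]
for $|\vartheta|\leq\min\bigl(M^{-1/3},1/\tau\bigr)$, where $M$ counts \emph{all} blocks in $t$. The reduction of one block at a time uses the very same matrices $A_0$ and $A_1$ of row-sum norm $1$; they propagate the $\widetilde\gamma^{(2)}$-bound without amplification while accumulating a geometric sum of $\xi^{(2)}$-contributions bounded by $2C\vartheta^3$ per block, producing the factor $M$. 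As in the original proof, the numbers of blocks of $\tO$s and of $\tL$s differ by at most one, so the statement transfers to blocks of $\tL$s.

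The constraint $|\vartheta|\leq M^{-1/3}$ plays exactly the role that $|\vartheta|\leq M^{-1/6}$ played in Proposition~\ref{prp_tildegamma_est}: it keeps the inductive bound $2CM\vartheta^3$ bounded by an absolute constant, which is the natural compatibility requirement for the iteration. The main obstacle is purely bookkeeping --- arranging uniform, absolute constants at each stage --- and is handled, just as before, by the fact that the relevant cumulant differences are $\mathcal O(1)$.
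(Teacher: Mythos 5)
Your proposal is correct and is precisely the adaptation the paper has in mind: it leaves the proof to the reader with the remark that one only needs to ``change some numbers'' in the proof of Proposition~\ref{prp_tildegamma_est}, and you carry out exactly that induction with $6$ replaced by $3$ and $M^{-1/6}$ by $M^{-1/3}$, the $\vartheta^2$-cancellation in $\xi^{(2)}_t/\gamma^{(2)}_t$ coming from the recurrence for $\cum_2$ just as the higher cancellations came from~\eqref{eqn_dt_rec}. Your observation that $\bigl\lvert\gamma^{(2)}_t(\vartheta)\bigr\rvert\leq 1$ holds trivially (so the $\xi^{(2)}$-bound no longer needs the induction hypothesis) is a small but genuine simplification over the original argument.
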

%}}} prp_tildegamma_est end
Noting that $\vartheta_0\leq M^{-1/3}$ and $\vartheta_0\leq 1/\tau$ for large $M$, we obtain
from Lemma~\ref{lem_gamma_tail} and Proposition~\ref{prp_tildegamma_est2}
\begin{equation*}
\begin{aligned}
\delta(j,t)&=\int_{-\vartheta_0}^{\vartheta_0} \gamma_t(\vartheta)\e(-jt)\,\mathrm d\vartheta
+\LandauO\left(\exp(-R^2/4)\right)
\\&=
\int_{-\vartheta_0}^{\vartheta_0} \gamma^{(2)}_t(\vartheta)\e(-j\vartheta)\,\mathrm d\vartheta
+\LandauO\left(M^{-1}R^4\right)
+\LandauO\left(\exp(-R^2/4)\right)
\end{aligned}
\end{equation*}
if only $M$ is large enough and $R\leq M^{1/6}$.
We extend the integral to $\mathbb R$, introducing an error
\[
\int_{\tau\vartheta_0}^\infty
\exp\bigl(-\cum_2(t)\vartheta^2/2\bigr)
\,\mathrm d\vartheta
\ll
\exp\bigl(-\cum_2(t)R^2/(2M)\bigr)
\leq
\exp\bigl(-R^2/2\bigr)
\]
since $\cum_2(t)\geq M$ by Lemma~\ref{lem_a2_lower_bound}.
We obtain the representation
\begin{equation*}%\label{eqn_delta_normal_approx}
\begin{aligned}
\delta(j,t)&=\int_{-\infty}^{\infty} \exp\bigl(-\cum_2(t)(\tau\vartheta)^2/2\bigr)\e(-j\vartheta)\,\mathrm d\vartheta
+\LandauO(E)\\
&=
\frac 1\tau\int_{-\infty}^{\infty} \exp\bigl(-\cum_2(t)\vartheta^2/2-ij\vartheta\bigr)\,\mathrm d\vartheta
+\LandauO(E)
\end{aligned}
\end{equation*}
for large enough $M$ and $R\leq M^{1/6}$,
where
\[E=M^{-1}R^4+\exp\bigl(-R^2/4\bigr).\]

Now, we choose $R=\log M$. Our hypothesis $R\geq 8$ implies $\exp\bigl(-R^2/4\bigr)\leq M^{-1}$
and therefore
\[E\ll M^{-1}\bigl(\log M\bigr)^4.\]

The appearing integral can be evaluated by completing to a square and evaluating a complete Gauss integral: 
\[
-\cum_2(t)\vartheta^2/2-ij\vartheta
=
-\left((\cum_2(t)/2)^{1/2}\vartheta+\frac{ij}{\sqrt{2}\cum_2(t)^{1/2}}\right)^2
-\frac{j^2}{2\cum_2(t)}.
\]
The imaginary shift is irrelevant due to the residue theorem, and after inserting the Gauss integral and slight rewriting we obtain the theorem.

%}}}
\subsection*{Data availability statement}
The datasets generated and analysed during the current study are available from the corresponding author on reasonable request.

%{{{ Bibliography
\bibliographystyle{amsplain}
\bibliography{SpiegelhoferWallner}
%}}} Bibliography

\bigskip
%Lukas Spiegelhofer
\begin{center}
\begin{tabular}{c}
Department Mathematics and Information Technology,\\
Montanuniversit\"at Leoben,\\
Franz-Josef-Strasse 18, 8700 Leoben, Austria\\
lukas.spiegelhofer@unileoben.ac.at\\
ORCID iD: 0000-0003-3552-603X
\end{tabular}
\end{center}

\smallskip
%Michael Wallner
\begin{center}
\begin{tabular}{c}
Institute of Discrete Mathematics and Geometry,\\
TU Wien,\\
Wiedner Hauptstrasse 8--10, 1040 Wien, Austria\\
michael.wallner@tuwien.ac.at\\
ORCID iD: 0000-0001-8581-449X
\end{tabular}
\end{center}
\end{document}